\def\bD{\boldsymbol\Delta}
\def\R{{\mathbb R}}
\def\C{{\mathbb C}}
\def\Z{{\mathbb Z}}
\def\<{\langle}
\def\>{\rangle}
\def\E{\mathbb E}
\def\D{\mathcal D}
\def\Le{\mathcal L}
\def\Id{\mathcal I}
\def\Ri{{\mathcal R}}
\def\eps{\epsilon}
\def\ep{\underline{\epsilon}}
\def\0{\underline 0}
\def\r{\underline r}
\def\e{\underline e}
\def\X{\underline X}
\def\1{\underline 1}
\def\c{\underline c}
\def\x{\underline x}
\def\y{\underline y}
\def\w{\underline w}
\def\u{\underline u}
\def\Y{\underline Y}
\def\var{\mathbb{V}\mathrm{ar}}
\def\cov{\mathbb{C}\mathrm{ov}}
\newcommand{\bel}{\begin{equation}\label}
\newcommand{\ee}{\end{equation}}
      \newtheorem{theorem}{Theorem}[section]
       \newtheorem{lemma}[theorem]{Lemma}
       \newtheorem{remark}{Remark}[section]
\theoremstyle{definition}
\begin{document}
\title[Recursion for optimal estimators under rotation] {Exploring recursion for optimal estimators under cascade rotation}
\author{Jan Kowalski}
\address{Politechnika Warszawska, Warszawa, Poland}
\author{Jacek Weso\l owski}
\address{G\l \'owny Urząd Statystyczny and  Politechnika Warszawska, Warszawa, Poland}
\email{wesolo@mini.pw.edu.pl}

\begin{abstract}
We are concerned with optimal linear estimation of means on subsequent occasions under sample rotation where evolution of samples in time is designed through a cascade pattern. It has been known since the seminal paper of Patterson (1950) that when the units are not allowed to return to the sample after leaving it for certain period (there are no gaps in the rotation pattern), one step recursion for optimal estimator holds. However, in some important real surveys, e.g. Current Population Survey in the US or Labour Force Survey in many countries in Europe, units return to the sample after being absent in the sample for several occasions (there are gaps in rotation patterns). In such situations difficulty of the question of the form of the recurrence for optimal estimator increases drastically. This issue has not been resolved yet. Instead alternative sub-optimal approaches were developed, as $K$-composite estimation (see e.g. Hansen et al. (1955)), $AK$-composite estimation (see e.g. Gurney and Daly (1965) or time series approach (see e.g. Binder and Hidiroglou (1988)).

In the present paper we overcome this long-standing difficulty, that is, we present analytical recursion formulas for the optimal linear estimator of the mean for schemes with gaps in rotation patterns. It is achieved under some technical conditions: ASSUMPTION I and ASSUMPTION II (numerical experiments suggest that these assumptions might be universally satisfied).  To attain the goal we develop an algebraic operator approach  which allows to reduce the problem of recursion for the optimal linear estimator to two issues: (1) localization of roots (possibly complex) of a polynomial $Q_p$ defined in terms of the rotation  pattern ($Q_p$ happens to be conveniently expressed through Chebyshev polynomials of the first kind), (2) rank of a matrix $S$ defined in terms of the rotation pattern and the roots of the polynomial $Q_p$. In particular, it is shown that the order of the recursion is equal to one plus the size of the largest gap in the rotation pattern. Exact formulas for calculation of the recurrence coefficients are given - of course, to use them one has to check (in many cases, numerically) that ASSUMPTIONs I and II are satisfied. The solution is illustrated through several examples of rotation schemes arising in real surveys.
\end{abstract}

\maketitle

\section{Introduction}

Repeated surveys with rotation of elements in samples are commonly used by
statistical offices and other institutions. Predesigned rotation of (groups of)
elements in a form of cascade patterns, that is such schemes when, on
each occasion the 'oldest' element (group of elements) leaves the sample and is replaced
by a new one, is also very popular but information carried in the survey
data is often not exploited in full.   This in turn leads to
constructing sub-optimal estimators with variance above the
achievable minimum. To enhance the use of optimal estimators in rotation schemes, in the seminal paper Patterson (1950) introduced the idea of
recurrence for best linear unbiased estimators (BLUEs) of the mean on each occasion. The main assumptions
were that the unknown population means are deterministic
and the responses are random variables whose variances and
correlation structure are fully known. Under exponential correlation and
assuming further that any element leaving the sample does not return
to the survey, Patterson proved that for any occasion $t$ the BLUE $\hat{\mu}_t$ of the
current mean $\mu_t$ at time $t$ (based on all past observations) can be computed from the following one-step recurrence:
\begin{equation}\label{eqn_rekur_Pat}
    \hat{\mu}_t = a_1(t) \hat{\mu}_{t-1} +
    \r^T_0(t) \X_{t} +
    \r^T_1(t) \X_{t-1}
\end{equation}
where $\X_i$ is the vector of observations at time $i=t,t-1$. The formulas for the
recurrence coefficients, that is the numbers $a_1(t)$ and the vectors $\r_0(t)$,
$\r_1(t)$, were given there as well. (Here and throughout the paper a vector, say $\r$, is understood as a column, $\r^T$ is its transpose. For two vectors $\r=(r_1,\ldots,r_n),\;\w=(w_1,\ldots,w_n)\in\R^n$ the expression $\r^T\w=\sum_{i=1}^n\,r_iw_i$ is just the scalar product of $\r$ and $\w$.)

Patterson's assumption that {\bf a unit leaving a sample never returns to the survey} was a core of his approach. If this assumption is violated (that is, there are gaps in rotation patterns) it has been known for years that serious difficulties arise if one seeks an analogue of the recurrence \eqref{eqn_rekur_Pat}. Being aware of this (see, e.g. Yansaneh and Fuller, 1998) researchers rather tried alternative approaches: Classical $K$-composite estimator was proposed in Hansen et al. (1955). Its optimality properties were developed in Rao and Graham (1964) and more recently in Ciepiela et al. (2012). The main difference is that instead of seeking the recurrence for BLUE, these authors restrict the optimality issue to linear unbiased estimators satisfying just the first order recurrence, that is the variance of the estimator based on the most recent estimator and observations from the last two occasions only is minimized. Adjustments, known as  $AK$-composite estimator, introduced in Gurney and Daly (1965), have been  developed, e.g. in Cantwell (1988, 1990) and  Cantwell and Caldwell (1998) - actually in these papers the authors introduce the notion of balanced multi-level design, and one-level design corresponds to the cascade pattern we consider here. Another approach based on regression composite estimator has been considered in Bell (2001), Fuller and Rao (2001) and Singh et al. (2001) (with implications for Canadian Labour Force Survey).

The difficulty in recursive estimation in repeated surveys for patterns with gaps was raised in Yansaneh and Fuller (1998), who analyzed variances of composite estimators in several rotation schemes. For a relatively current description of the state of art in the area one can consult Steel and McLaren (2008), in particular Sec. IV on different rotation patterns and Sec. V on composite estimators. Comparisons of effectiveness under different cascade patterns can be found in McLaren and Steel (2000) and Steel and McLaren (2002). A very recent paper on optimal estimation under rotation is by Towihidi and Namazi-Rad (2010). Some of these references deal also with time series approach (which is not considered in this paper) in which the unknown means are treated as random quantities - an overview of such approach can be found in Binder and Hidiroglou (1988). For a more recent development of this setting see e.g. Lind (2005).

As for the original approach of Patterson, the next result
concerning the recursive form of the BLUE was presented in
Kowalski (2009), where singleton gaps in the rotation pattern were allowed. As in Patterson (1950), this paper was devoted to the "classical" situation in which the coefficients in \eqref{eqn_rekur} below are allowed to depend on $t$.   Three conclusions
from that work have an impact on this paper. Firstly, it was
suggested that the formula (\ref{eqn_rekur_Pat}) may be generalized
to an arbitrary rotation scheme (including gaps in the pattern) by incorporating the
optimal estimators and observations from a probably larger (but still as small as possible) number of past
occasions and that the order of the recurrence should depend on the
size of the largest gap. Secondly, it was observed there that the
exponential correlation, as assumed in Patterson (1950), is crucial for
obtaining the recursive representation and that it is plausible to
restrict oneself to the class of 'cascade' schemes. Both these assumptions are kept below.
Finally, since according to numerical simulations the recurrence
coefficients appear to be quickly convergent as $t \rightarrow
\infty$, a suggestion was made to consider the 'limiting' case of
the "classical" setting, in which the recurrence coefficients do not change in
time.

We want to stress that in the present paper {\bf any set of gaps in the cascade rotation pattern is allowed}.
The aim is to show that the recurrence
\begin{equation}\label{eqn_rekur}
    \hat{\mu}_t = a_1 \hat{\mu}_{t-1} + \ldots + a_p \hat{\mu}_{t-p}
    + \underline{r}_0^T\,\underline{X}_{t} +
    \underline{r}_1^T\,\underline{X}_{t-1} + \ldots +
    \underline{r}_p^T\,\underline{X}_{t-p}
\end{equation}
holds for any cascade rotation scheme and to find the order of recurrence $p$, the numerical coefficients $a_1,\ldots,a_p$ and the vector coefficients  $\r_0,\ldots,\r_p$. Let us emphasize that the representation \eqref{eqn_rekur} is
"stationary" in the sense that neither the order of the recurrence $p$
nor the recurrence coefficients $(a_i)$ and $(\r_i)$ depend
on $t$.

Our main result lies in reducing the recurrence problem to analysis of a certain polynomial $Q_p$ (of degree $p$, where $p-1$ is the size of the largest gap in the rotation pattern) and to the question of unique solvability of a certain linear system of
equations, which depends on roots of $Q_p$. Luckily the polynomial $Q_p$ happens to be conveniently expressed through Chebyshev polynomials of the first kind.
We provide a sufficient condition in terms of localization properties of  roots of $Q_p$  for existence of the
recursive form of the BLUE of order $p$, as given in \eqref{eqn_rekur}, and derive explicit formulas (exploiting roots of $Q_p$) for the recurrence coefficients $(a_i)$ and $(\r_i)$. The forms of the coefficients depend also on the unique solution of the linear system mentioned above. The result is illustrated by several examples related to the real life surveys.

The convergence of recursion coefficients which we observed numerically in many "classical" schemes (that is, with coefficients in the analogue of \eqref{eqn_rekur} depending on $t$) of different complexity indicates that
solution to such "stationary" recurrence problem should exist universally (actually only in the Patterson case, $p=1$, such convergence is formally proved).
If so it can be treated as an approximate solution for the
"classical" scheme. As the reader will see, this intuition is largely
confirmed in this paper. Our main result still is not universal even within models with exponential correlation. Our approach heavily relies on two assumptions (ASSUMPTION I and ASSUMPTION II below) which allow us to claim that the recurrence \eqref{eqn_rekur} holds true. Nevertheless, we performed many numerical experiments for different rotation patterns and different values of the correlation and they all suggest that both these assumptions may be universally satisfied. Unfortunately, at the present stage we are unable to confirm theoretically these observations.

The plan of the paper is as follows. In Section \ref{model} we introduce in mathematical terms the model we are working with. In Section
\ref{recurrence} we introduce our two core assumptions and formulate the main result of the paper. Section \ref{eg} contains examples of applications of the main result in several popular rotation schemes. The main body of mathematics is deferred to Appendix. In its first part \ref{App1}  algebraic properties of shift operators are considered. They are essential for the proof of the recursion formula which is given in the second part \ref{App2} of Appendix.

\section{Model}\label{model}
Let $(X_{i,j})_{i,j\in \Z}$ be a doubly infinite matrix of random variables. Heuristically, $X_{i,j}$ represents the value of variable $\mathcal{X}$ measured for the unit (rotation group) $i$ on the occasion $j$. We assume that the expectation of $X_{i,j}$ depends only on the occasion and not on the unit, that is
$$
\E\,X_{i,j}=\mu_j,\qquad\qquad \forall\,i,j\in\Z.
$$
Moreover, we assume exponential in time correlations between $X_{i,j}$'s for the same unit and no correlations between different units (following Patterson (1950) model), that is
$$\cov(X_{i,j},\,X_{k,l})=\rho^{|j-l|}\delta_{i,k}\qquad\qquad \forall\,i,j,k,l\in\Z,$$
where $|\rho|\in (0,1)$ and $\delta_{i,k}=1$ if $i=k$, otherwise $\delta_{i,k}=0$. (In practical situations often $\rho$ is in $[0,1)$. In the case $\rho=0$ observations from the past cannot improve present linear estimator of the mean, therefore we do not consider such case below.)  Consequently,  $$\var\,X_{i,j}=1,\qquad\qquad i,j\in \Z.$$

For any $j\in\Z$ we are interested in the BLUE of $\mu_j$ based on all available observations from occasions $i\le j$. For a fixed positive integer $N$ denote by
$$
\X_j=(X_{j,j},\,X_{j+1,j},\ldots,X_{j+N-1,j})^T
$$
the {\bf maximal sample} (of size $N$) on the occasion $j\in \Z$. Then
$$
\E\,\X_j=\mu_j\,\1,\qquad\qquad j\in \Z,
$$
where $\1=(1,1,\ldots,1)^T\in\R^N$, and
$$
\cov(\X_j,\,\X_{j-k})={\bf C}^k=\left[\cov(\X_j,\,\X_{j+k})\right]^T,\qquad\qquad j\in\Z,\;k\ge 0,
$$
where ${\bf C}$ is an $N\times N$ matrix of the form
$${\bf C} = \left[ \begin{array}{c c c c}
      0 & \rho & & 0 \\
      0 & \ddots & \ddots & \vspace{0pt} \\
       & \ddots & \ddots & \rho \vspace{2pt} \\
      0 & & 0 & 0
      \end{array} \right]$$
Note that ${\bf C}^n=0$ for any $n\ge N$.

The effective sample will be defined by a {\bf cascade pattern}, which is a vector $\ep=(\eps_1,\ldots,\eps_N)^T\in\{0,1\}^N$ with $\eps_1=\eps_N=1$. Let
$$
n=\sum_{j=1}^N\,\eps_j\qquad\mbox{and}\qquad h=N-n.
$$
Let $H$ be the set of zeros in the pattern $\ep$, that is $j\in H$ iff $\eps_j=0$. Obviously, $\#\,H=h$. A gap of size $m$ is a maximal set of sequential $m$ zeros, that is a set satisfying
$$
\{j,j+1,\ldots,j+m-1\}\subset H\qquad \mbox{and}\qquad j-1,\,j+m\not\in H.
$$
Consequently, $H$ is a union of, say, $s$ gaps of sizes $m_r$, $r=1,2,\ldots,s$, and $\sum_{r=1}^s\,m_r=h$.

The coverage $p$ of the pattern (see Kowalski, 2009 for equivalent definition) is the size of the largest gap increased by one:
$$
p=1+\max_{1\le r\le s}\,m_r.
$$

On each occasion $j\in\Z$ we may not observe the maximal sample $\X_j$ but the {\bf effective sample} of size $n$ defined by the cascade pattern $\ep$, that is the vector
$$
\Y_j=(X_{j+k-1,j},\,k\in\{1,\ldots,N\}\setminus H)^T,
$$
that is values of $X_{i,j}$'s represented by zeros (gaps) in the cascade pattern $\eps$ are removed from the sample.

We consider BLUE $\hat{\mu}_t$ of the mean $\mu_t$ on the occasion $t\in\Z$ which is based on observations $\Y_j$, $j\le t$. That is
$$
\hat{\mu}_t=\sum_{i=0}^{\infty}\,\tilde{\w}_i^T\,\Y_{t-i}
$$
with $\tilde{\w}_i\in\R^n$, $i\ge 0$, which minimize $\var\,\hat{\mu}_t$ under the unbiasedness constraints
$$
\tilde{\w}_0^T\,\1=1\qquad\qquad\mbox{and}\qquad\qquad \tilde{\w}_i^T\,\1=0,\quad i\ge 1.
$$

It is both obvious and crucial for our approach that, equivalently,
\bel{estyma}
\hat{\mu}_t=\sum_{i=0}^{\infty}\,\w_i^T\,\X_{t-i}
\ee
with $\w_i\in\R^N$, $i\ge 0$, minimizing $\var\,\hat{\mu}_t$ under unbiasedness constraints
\bel{unb}
\w_0^T\,\1=1,\qquad\qquad \w_i^T\,\1=0,\quad i\ge 1,
\ee
and cascade pattern constraints
\bel{pat}
\w_i^T\,\e_j=0\qquad \forall\,i\ge 0,\;\forall\,j\in H,
\ee
where $\e_j=(0,\ldots,0,1,0,\ldots,0)^T$ (with 1 at $j$th position) is $j$th vector of the canonical basis in $\R^N$, $j\in H$. Note that the constraint \eqref{pat} actually says that $j$th entries ($j\in H$) of vectors $\w_i$, $i\ge 0$, are all zeros.
\section{Recurrence}\label{recurrence}
In order to formulate our main result which gives the exact recurrence for the BLUEs under any rotation pattern we need to introduce two objects: a polynomial $Q_p$ and a matrix ${\bf S}$. They both look very technical and do not have immediate heuristic interpretations. Nevertheless they appear to be of essential importance for the final recurrence formula.
\subsection{Polynomial $Q_p$}
Recall that $T_k$, the $k$th Chebyshev polynomial of the first kind, is defined by
$$
T_k(x)=\cos(k\arccos\,x),\qquad k=0,1,\ldots.
$$

Define an $m\times m$ symmetric Toeplitz matrix polynomial function ${\bf T}_m$ by
\bel{Tm}
{\bf T}_m=\left[\begin{array}{cccccc}
T_0 & T_1 & T_2 & \cdots & T_{m-2} & T_{m-1} \\
T_1 & T_0 & T_1 & \cdots & T_{m-3} & T_{m-2} \\
\vdots & \vdots & \vdots & \ddots & \vdots & \vdots \\
T_{m-2} & T_{m-3} & T_{m-4} & \cdots & T_0 & T_1 \\
T_{m-1} & T_{m-2} & T_{m-3} & \cdots & T_1 & T_0 \end{array}\right]
\ee
and an $m\times m$ tridiagonal invertible matrix
\bel{Rm}
{\bf R}_m=\left[\begin{array}{cccccc}
1+\rho^2 & -\rho & 0 & \cdots & 0 & 0 \\
-\rho & 1+\rho^2 & -\rho & \cdots & 0 & 0 \\
0 & -\rho & 1+\rho^2 & \cdots & 0 & 0 \\
\vdots & \vdots & \vdots & \ddots & \vdots & \vdots \\
0 & 0 & 0 & \cdots & 1+\rho^2 & -\rho \\
0 & 0 & 0 & \cdots & -\rho & 1+\rho^2 \end{array}\right].
\ee
Note that ${\bf R}_m$ is non-singular.

For a cascade pattern $\ep$ with gaps sizes $m_1,\ldots,m_s$ and coverage $p$ define a polynomial $Q_p$ by
\bel{Qp}
Q_p(x)=(N-1)(1+\rho^2-2\rho x)+1-\rho^2-(1+\rho^2-2\rho x)^2\sum_{j=1}^s\,\mathrm{tr}({\bf T}_{m_j}(x){\bf R}_{m_j}^{-1}).
\ee

Since $\mathrm{tr}({\bf T}_m(x){\bf R}_m^{-1})$ is a polynomial of degree $m-1$ in $x$, $$\mathrm{deg}\,Q_p=2+\max_{1\le j\le s}\,(m_j-1)=p.$$

\subsection{Matrix ${\mathbf S}$}
Consider again a cascade pattern $\ep$ with coverage $p$ and $\#(H)=h=m_1+\ldots+m_s$. For complex numbers $d_1,\ldots,d_p$ define a $(ph+h+1)\times p(h+1)$ matrix ${\bf S}$ through its block structure
\bel{S}
{\bf S}={\bf S}(d_1,\ldots,d_p)=\left[\begin{array}{cccc}
\widetilde{\bf G}(d_1) & \widetilde{\bf G}(d_2) & \cdots & \widetilde{\bf G}(d_p) \\
{\bf G}(d_1) & {\bf 0} & \cdots & {\bf 0} \\
{\bf 0} & {\bf G}(d_2) & \cdots & {\bf 0} \\
\vdots & \vdots & \ddots & \vdots \\
{\bf 0} & {\bf 0} & \cdots & {\bf G}(d_p) \end{array}\right].
\ee
The blocks $\widetilde{\bf G}(d_i)$ are $(h+1)\times (h+1)$ matrices
\bel{tildeG}
\widetilde{\bf G}(d)=\tfrac{1}{1-\rho^2}\,\left[\begin{array}{cc} (N-1)(1-d\rho)+1-\rho^2 & (1-d\rho)\1_h^T \\
\; & \; \\
(1-d\rho)\1_h & \mathrm{diag}(\widetilde{\bf H}_{m_1},\ldots,\widetilde{\bf H}_{m_s})\end{array}\right]
\ee
with $\widetilde{\bf H}_m=\widetilde{\bf H}_m(d)$ being an $m\times m$ upper bi-diagonal matrix
\bel{tildeH}
\widetilde{\bf H}_m(d)=\left[\begin{array}{cccc} 1 & -d\rho & & \\
                                                   & \ddots      & \ddots &  \\
                                                 &        & \ddots & -d\rho \\
                                                 &              &   &  1 \end{array}\right].
\ee
The blocks ${\bf G}(d_i)$ are $h\times (h+1)$ matrices
\bel{bfG}
{\bf G}(d)= \tfrac{1}{1-\rho^2}\,[(1-d\rho)(d-\rho)\1_h,\;d\,\mathrm{diag}({\bf H}_{m_1},\ldots,{\bf H}_{m_s})],
\ee
where ${\bf H}_m={\bf H}_m(d)$ is an $m\times m$ tri-diagonal matrix \small
\bel{Hm}
{\bf H}_m(d)=\left[\begin{array}{cccc} 1+\rho^2 & -d\rho    &        &   \\
                                      \hspace{-2mm} -\rho/d     & \hspace{-3mm}\ddots & \hspace{-3mm} \ddots  &   \\
                                                   &\hspace{-5mm} \ddots   & \hspace{-3mm}\ddots & \hspace{-2mm} -d\rho \\
                                            &     & \hspace{-10mm} -\rho/d   & \hspace{-1mm}  1+\rho^2 \end{array}\right].
\ee

The numbers $d_1,\ldots,d_p$ considered above are related to (potentially complex) roots $x_1,\ldots,x_p$ of the polynomial $Q_p$ through the relation $2x_i=d_i+1/d_i$, and $|d_i|<1$, $i=1,\ldots,p$. Some more details are given in the remark below.

\begin{remark}\label{quadr}
Let $x\in \C$ be such that either $\Im\,x\ne 0$ or $\Re\,x\not\in[-1,1]$.

Then the equation $$\tfrac{1}{2}\left(d+\tfrac{1}{d}\right)=x$$ in $d$ has exactly two roots, say, $d_+(x)$ and $d_-(x)$ such that
$$|d_-(x)|<1\qquad\qquad\mbox{and}\qquad\qquad |d_+(x)|>1.$$

If additionally $\Im\,x=0$ then $d_+(x)$ and $d_-(x)$ are real.

By $x^*$ denote complex conjugate of $x$ with $\Im\,x\ne 0$. Then
$$d_-(x)=\left(d_-(x^*)\right)^*\qquad\mbox{and}\qquad d_+(x)=\left(d_+(x^*)\right)^*.$$
\end{remark}

\subsection{Main result}

Our main result gives the recursion of depth equal to the coverage $p$ of the cascade scheme together with analytic forms of the coefficients which are ready for numerical implementation. Actual examples of such implementations are presented in Section \ref{eg}. The proof we offer (see Appendix) is based on two basic assumptions concerning the polynomial $Q_p$ and the matrix ${\bf S}$.

\vspace{3mm}
ASSUMPTION I: The polynomial $Q_p$ has distinct roots $x_1,\ldots,x_p\not\in[-1,1]$.

\vspace{3mm}
ASSUMPTION II: The matrix ${\bf S}={\bf S}(d_1,\ldots,d_p)$, where $d_i=d_-(x_i)$, $i=1,\ldots,p$, is of full rank.

\begin{theorem}\label{main}
If {\em ASSUMPTION}s {\em I} and {\em II} are satisfied then for any $t\in \Z$ the recursion
\bel{rec}
\hat{\mu}_t=\sum_{k=1}^p\,a_k\hat{\mu}_{t-k}+\sum_{k=0}^p\,\r_k^T\,\X_{t-k}
\ee
holds with
\bel{ak}
a_k=(-1)^{k+1}\,\sum_{1\le j_1<\ldots<j_k\le p}\,d_{j_1}\ldots d_{j_k},\quad k=1,\ldots,p,
\ee
and
$$
\r_i=\sum_{m=1}^p\,\left[\left(v_i(d_m){\bf I}-v_{i-1}(d_m){\bf C}^T\right)\bD{\bf N}(d_m)\,\sum_{j\in H'}\,c_{j,m}\e_j\right],\quad i=0,1,\ldots,p,
$$
where $\e_0=\1$,  $H'=\{0\}\cup H$, $v_0(d)=1$, $v_{-1}(d)=0$,
\bel{vid}
v_i(d)=d^i-\sum_{l=1}^i\,a_ld^{i-l},\quad i=1,\ldots,p,
\ee
$\bD=({\bf I}-{\bf C}{\bf C}^T)^{-1}$, ${\bf N}(d)={\bf I}-d{\bf C}$  and with
$$\c=[(c_{j,1},\,j\in H'),\;(c_{j,2},\, j\in H'),\;\ldots,\;(c_{j,p},\,j\in H')]^T$$
being the unique solution (it exists due to {\em ASSUMPTION II}) of the linear system
$$
{\bf S}\c=(1,0,\ldots,0)^T\in\R^{ph+h+1}
$$

Moreover,
\bel{waria}
\var(\hat{\mu}_t)=\sum_{m=1}^p\,c_{0,m}.
\ee
\end{theorem}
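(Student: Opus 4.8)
The plan is to set up the problem in the space of admissible weight sequences $(\w_i)_{i\ge 0}$ and to realize the BLUE via the normal equations (Lagrange conditions) for the constrained minimization in \eqref{estyma}--\eqref{pat}. Since the covariance operator acts on the doubly infinite sequence of maximal samples by the Toeplitz-type block structure $\cov(\X_j,\X_{j-k})={\bf C}^k$, the natural object is the generating function $W(d)=\sum_{i\ge 0}\w_i d^i$ together with the constraint generating function; the exponential correlation makes the covariance kernel rational in the shift variable, so the stationarity of the optimal weights (the fact that $p$, $(a_k)$, $(\r_k)$ do not depend on $t$) should emerge from a rational-function identity. I would first derive, from the Karush--Kuhn--Tucker conditions, that the optimal $\w_i$ satisfy a linear recurrence whose characteristic polynomial is governed by $Q_p$: the factor $(1+\rho^2-2\rho x)$ comes from inverting the tridiagonal ${\bf R}_m$-type blocks attached to each gap, and the substitution $2x=d+1/d$ linearizes that quadratic, which is exactly Remark~\ref{quadr}. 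The roots $d_1,\dots,d_p$ with $|d_i|<1$ are selected because the weight sequence must be summable (square-summable), so only the decaying modes survive; ASSUMPTION~I guarantees these $p$ modes are distinct, hence a basis.

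Next I would translate the recurrence on $(\w_i)$ into the recurrence \eqref{rec} on the estimators $\hat\mu_t$. Writing $\hat\mu_t=\sum_i \w_i^T \X_{t-i}$ and applying the operator $\prod_{k=1}^p(1-d_k B)$ (where $B$ is the backward shift in $t$), the left side becomes $\sum_{k=0}^p(-1)^k e_k(d)\,\hat\mu_{t-k}$ with $e_k$ the elementary symmetric functions — this is precisely formula \eqref{ak} for the $a_k$. The right side collapses to a finite combination $\sum_{k=0}^p \r_k^T \X_{t-k}$ because the same operator annihilates the tail of the weight sequence except for finitely many boundary terms; the vectors $\v_i(d)$ defined in \eqref{vid} are exactly the ``partial'' polynomials $v_i(d)=d^i-\sum_{l=1}^i a_l d^{i-l}$ that record these boundary contributions, and the operators $\bD=({\bf I}-{\bf C}{\bf C}^T)^{-1}$ and ${\bf N}(d)={\bf I}-d{\bf C}$ arise when one resums the geometric-type series $\sum_i d^i {\bf C}^i$ and its transpose against the covariance. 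The coefficients $c_{j,m}$ are the Lagrange multipliers attached to the unbiasedness constraint ($j=0$, with $\e_0=\1$) and the $h$ pattern constraints ($j\in H$) for each mode $m$; collecting the conditions that (a) the unbiasedness normalization $\w_0^T\1=1$ holds, (b) $\w_i^T\1=0$ for $i\ge1$, and (c) $\w_i^T\e_j=0$ for all $j\in H$, $i\ge 0$ — all expressed through the $\widetilde{\bf G}(d_m)$ (boundary/order-zero block) and ${\bf G}(d_m)$ (the recurring block for higher orders) — yields exactly the linear system ${\bf S}\c=(1,0,\dots,0)^T$. ASSUMPTION~II is what makes $\c$ unique, and then the explicit $\r_i$ formula follows by substituting $\c$ back.

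Finally, for the variance identity \eqref{waria}, I would use the standard fact that for a BLUE obtained by Lagrange minimization the optimal variance equals the inner product of the constraint right-hand side with the multiplier vector; since the unbiasedness constraint contributes the ``$1$'' in the right-hand side of ${\bf S}\c=(1,0,\dots,0)^T$ and carries multiplier mass $c_{0,m}$ in mode $m$, summing over modes gives $\var(\hat\mu_t)=\sum_{m=1}^p c_{0,m}$. The main obstacle I anticipate is not any single computation but the bookkeeping that shows the infinite optimality system genuinely \emph{decouples} along the $p$ modes $d_1,\dots,d_p$ and that the telescoping by $\prod(1-d_kB)$ leaves precisely the stated finite boundary data — in particular, verifying that the gap-blocks ${\bf H}_m(d)$, $\widetilde{\bf H}_m(d)$ assemble correctly with the global unbiasedness row, and that no extra relations are lost or created when passing from the effective sample $\Y_j$ to the padded maximal sample $\X_j$. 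Establishing that equivalence cleanly, and checking that the resulting $W(d)$ really is a rational function whose only poles inside the unit disk are the $d_i$ (so that ASSUMPTION~I suffices for a complete modal basis), is where the real work lies; the rest is substitution into the generating-function identity and reading off coefficients.
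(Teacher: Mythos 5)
Your plan follows essentially the same architecture as the paper's proof: Lagrange (normal-equation) conditions for the constrained minimization \eqref{estyma}--\eqref{pat}, an ansatz in which weights and multipliers are combinations of the $p$ geometric modes $d_m^i$ with $|d_m|<1$, a lag-polynomial telescoping that produces the signed elementary symmetric functions \eqref{ak} and the boundary polynomials \eqref{vid}, the finite system ${\bf S}\c=(1,0,\ldots,0)^T$ coming from the constraints, and the identification of $\var(\hat\mu_t)$ with the multiplier attached to the normalization $\w_0^T\1=1$. This matches the structure of the argument in Appendix \ref{App2}, including the role of $\D^{-1}=(\Id-{\bf C}^T\Ri)\bD(\Id-{\bf C}\Le)$ behind your ``resummation'' of $\sum_i d^i{\bf C}^i$.

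The genuine gap is precisely the step you defer as ``where the real work lies,'' and it is not mere bookkeeping: you must show why the \emph{infinite} family of constraints $\w_0^T\1=1$, $\w_i^T\1=0$ ($i\ge1$), $\w_i^T\e_j=0$ ($j\in H$, $i\ge0$) can be met by only $p$ decaying modes and collapses to the \emph{finite} system ${\bf S}\c=(1,0,\ldots,0)^T$. After factoring out a Vandermonde-type full-rank factor in the $d_m$'s (this is where distinctness from ASSUMPTION I is used), the constraints amount to $(p+1)(h+1)$ equations in the $p(h+1)$ unknowns $c_{j,m}$ (see \eqref{linrow}--\eqref{abeq}), i.e.\ an overdetermined system; it is consistent only because each per-mode block $\bar{\bf G}(d_m)$ is singular. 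That singularity does not follow from the KKT conditions, nor from declaring $Q_p$ ``the characteristic polynomial of a recurrence for the weights''; it is exactly the identity $\det\bigl({\bf H}_{11}(d)-{\bf H}_{12}(d){\bf H}_{22}^{-1}(d){\bf H}_{21}(d)\bigr)=Q_p\bigl(\tfrac12(d+d^{-1})\bigr)$, established via the similarity ${\bf H}_m(d)={\bf D}_m^{-1}{\bf R}_m{\bf D}_m$ and the Chebyshev trace computation $\1^T{\bf H}_m^{-1}\1=\tr\bigl({\bf T}_m(x){\bf R}_m^{-1}\bigr)$, and it is the only place where the specific polynomial \eqref{Qp} (hence ASSUMPTION I) enters; without it your plan neither derives the formula for $Q_p$ nor shows that $p$ modes suffice. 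Relatedly, your ``necessity'' framing (the optimal weights must obey such a recurrence) cannot be carried through as stated, because the KKT system leaves infinitely many unknown multipliers $\lambda_{j,l}$; the workable route, as in the paper, is construct-and-verify: define $\overline{\w}$ by \eqref{overw}, check $\D\overline{\w}=\overline{\underline{\Lambda}}$ with $\lambda_{j,l}=\sum_m c_{j,m}d_m^l$ (including that these are real, using the conjugate-pair structure of the $d_m$ and of $\c$), verify the constraints via the singularity argument, invoke convexity for uniqueness of the optimum, and only then read off \eqref{rec} and \eqref{waria} by the shift-operator algebra.
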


In the next section we show how the above theoretical result can be applied in several basic schemes, in particular, in those which are used in real life surveys, while the proof of Theorem \ref{main} is given in the second part \ref{App2} of Appendix. It is based on a purely algebraic operator approach which is introduced earlier in the first part \ref{App1} of Appendix.

We would like to stress that intensive numerical experiments suggest that ASSUMPTIONS I and II may be universally satisfied, however at this moment we do not have mathematical proof of this fact (except the case $p=1,2$ and $p=3$ for a special rotation pattern). Thus applications of the above recursion formula (for $p>2$) in surveys have to be preceded by a numerical check (which is rather straightforward) that ASSUMPTIONS I and II are satisfied. Examples are given in Section \ref{eg}.

\section{Examples}\label{eg}

\subsection{Patterson's scheme, $p=1$.}
The cascade Patterson scheme is used e.g. for conducting the Labour
Force Survey in Australia ($N=n=8$, see Australian Bureau of Statistics (2002)) and Canada
($N=n=6$, see Singh et al. (1990)). There are no zeros in the pattern, hence $h=0$ and the
polynomial $Q_p=Q_1$, see \eqref{Qp}, does not contain the summand with the trace, that is
$$
    Q_1(x) = (N-1)(1+\rho^2-2\rho x) + 1-\rho^2.
$$
Its only root $x_1 = - \frac{1+\rho^2}{2\rho} -
\frac{1-\rho^2}{2(N-1)\rho}$ is real and satisfies $|x_1| >
\frac{1+\rho^2}{2|\rho|} > 1$, that is ASSUMPTION I is satisfied. It yields also real $d_1=d_-(x_1)$ of the form
$$
d_1 = \frac{N + (N-2)\rho^2-\sqrt{[N + (N-2)\rho^2]^2 - 4(N-1)^2\rho^2}}{2(N-1)\rho}.
$$

Moreover, $\mathbf{S}$ as defined in \eqref{S} is a $1\times 1$ matrix of the form $\mathbf{S} = \left[ (N-1)\frac{1-d_1\rho}{1-\rho^2} + 1  \right] \neq \mathbf{0}$, that is ASSUMPTION II trivially holds. Thus from Theorem \ref{main}, for all $t \in \Z$ we have
$$
\hat{\mu}_t = a_1 \hat{\mu}_{t-1} +\r_0^T\, \X_{t}  +\r_1^T\, \X_{t-1},
$$
where
$$
    \left\{\begin{array}{l}
    a_1 = d_1 \vspace{5pt} \\
    \r_0 = c_{0,1}\,{\bf N}(d_1)\,\1, \vspace{5pt} \\
    \r_1 = -c_{0,1}{\bf C}^T\,{\bf N}(d_1)\,\1,
    \end{array}\right.,
$$
where $$c_{0,1}=\tfrac{1}{(N-1)\tfrac{1-d_1\rho}{1-\rho^2}+1}.$$

Taking for example $N=6$ and $\rho=0.9$, we obtain for all $t$:
$$
    \hat{\mu}_t = 0.7942\ \hat{\mu}_{t-1}
    + \left[\begin{array}{r}
    \ 0.1765 \\
    \ 0.1765 \\
    \ 0.1765 \\
    \ 0.1765 \\
    \ 0.1765 \\
    \ 0.1176
    \end{array}
    \right]^T \X_{t}
    + \left[\begin{array}{r}
    \ 0.0000 \\
    -0.1588 \\
    -0.1588 \\
    -0.1588 \\
    -0.1588 \\
    -0.1588
    \end{array}
    \right]^T \X_{t-1}.
$$

\begin{remark}
Patterson (1950) considered the same scheme in the "classical"
model. The recurrence coefficient $a_1(t)$ was formally proved to
converge with $t\rightarrow\infty$ and the limit was shown to be $a_1$ as
given above. The vectors $\r_0(t)$ and
$\r_1(t)$, being continuous functions of $a_1(t)$,
converge to $\r_0$ and $\r_1$, respectively.
That is, the "stationary" solution is indeed consistent with
asymptotics of the "classical" one.
\end{remark}

\subsection{Schemes with gaps of size 1, $p=2$.}
The polynomial $Q_p=Q_2$, see \eqref{Qp}, has the following
form:
$$
Q_2(x) = -\tfrac{4h\rho^2}{1+\rho^2} x^2 - 2(N-2h-1)\rho x +
    (N-h-1)(1+\rho^2) + 1-\rho^2.
$$
As $1-\rho^2>0$, it is immediate that its discriminant
\bel{del}\Delta=4(N-2h-1)^2\rho^2+4\frac{4h\rho^2}{1+\rho^2}[(N-h-1)(1+\rho^2)+1-\rho^2]> 4\rho^2(N-1)^2>0.\ee
Thus $Q_2$ has two single real roots
$$
x_{\pm} = (1+\rho^2)\frac{-2(N-2h-1)\rho \pm \sqrt{\Delta}}{8h\rho^2}.
$$
Note that since the size of all gaps is one, then necessarily $N-h-1\ge h\ge 1$. Using this fact and inequality \eqref{del} we obtain
Therefore $$|x_{\pm}| > (1+\rho^2)\,\frac{N-h-1}{2|\rho|} \ge \tfrac{1+\rho^2}{2|\rho|}>1,\qquad \mbox{since}\qquad |\rho|\in(0,1).$$ Thus the ASSUMPTION I of Theorem \ref{main} is satisfied.

By Remark \ref{quadr} it follows that $d_1=d_{-}(x_-)=x_-+\sqrt{x_-^2-1}<0$ and $d_2=d_{-}(x_+)=x_+-\sqrt{x_+^2-1}>0$ are real numbers.

Since in this case $s=h$ and $m_1=\ldots=m_h=1$ we have $\widetilde{\bf H}_1(d_i)=1$ and ${\bf H}_1(d_i)=1+\rho^2$, $i=1,2$. Therefore the equation ${\bf S}\c=\e$ implies
$$
(1-d_i\rho)(d_i-\rho)c_{0,i}+(1+\rho^2)c_{k,i}=0,\qquad k=1,\ldots,h,\;i=1,2.
$$
Thus $c_{1,1}=c_{2,1}=\ldots=c_{h,1}$ and $c_{1,2}=c_{2,2}=\ldots=c_{h,2}$. Consequently, the system ${\bf S}\c=\e$ reduces to the system with four unknowns $c_{0,1}$, $c_{1,1}$, $c_{0,2}$ and $c_{1,2}$:
$$
\widetilde{\bf S}(c_{0,1},\,c_{1,1},\,c_{0,2},\,c_{1,2})^T=(1,\,0,\,0,\,0)^T
$$
with \small
$$
    \widetilde{\mathbf{S}} = \tfrac{1}{1-\rho^2}\,\left[\begin{array}{c c c c}
    (N-1)(1-d_1\rho) + 1-\rho^2 & h(1-d_1\rho) &
    (N-1)(1-d_2\rho) + 1-\rho^2 & h(1-d_2\rho) \\
    1-d_1\rho & 1 &
    1-d_2\rho & 1 \\
    (1-d_1\rho)(d_1-\rho) & d_1(1+\rho^2) &
    0 & 0 \\
    0 & 0 & (1-d_2\rho)(d_2-\rho) & d_2(1+\rho^2)
    \end{array}\right].
$$

\normalsize To show that $\widetilde{\mathbf{S}}$ is non-singular we first show that
\bel{rod}\rho(d_1+d_2) \ge 0.\ee To this end we first note that
\bel{roro}\rho(x_-+x_+)=-(1+\rho^2)\tfrac{N-2h-1}{2h}\le 0.\ee
Moreover,
$$
\rho(d_1+d_2)=\rho(x_-+x_+\sqrt{x_-^2-1}-\sqrt{x_+^2-1})=\rho(x_-+x_+)\left(1+\tfrac{x_--x_+}{\sqrt{x_-^2-1}+\sqrt{x_+^2-1}}\right)$$$$=
\tfrac{\rho(x_-+x_+)}{\sqrt{x_-^2-1}+\sqrt{x_+^2-1}}(\sqrt{x_-^2-1}+x_-+\sqrt{x_+^2-1}-x_+).
$$
Due to \eqref{roro} the last expression is non-negative since the second factor is strictly negative.
Now we are ready to consider the determinant
$$
    \det \widetilde{\mathbf{S}}= \frac{(d_2-d_1)\rho}{(1-\rho^2)^4}\,s(d_1,d_2),
$$
where
$$
s(d_1,d_2)=(1+\rho^2)[(N-1)(1-d_1\rho)(1-d_2\rho)+(1-\rho^2)(1+d_1d_2\rho^2)]$$$$+h(1-d_1\rho)(1-d_2\rho)(-1+(d_1+d_2)\rho+d_1d_2\rho^2-2\rho^2).
$$
We note that $|d_i|<1$, $i=1,2$, and thus $|d_1d_2|<1$. Consequently, we have $1+\rho^2>(1-d_1\rho)(1-d_2\rho)>0$, $1+d_1d_2\rho^2>0$. These inequalities together with \eqref{rod} yield
$$
s(d_1,d_2)>(1-d_1\rho)(1-d_2\rho)\left\{(N-1)(1+\rho^2)-h[1+d_1d_2\rho^2+2\rho^2]\right\}\,$$$$>(1-d_1\rho)(1-d_2\rho)[(N-h-1)(1+\rho^2)-2h\rho^2]
>(1-d_1\rho)(1-d_2\rho)(N-2h-1)(1+\rho^2)\ge 0.
$$
Consequently, $\det \widetilde{\mathbf{S}}\ne 0$.

Since $\mathrm{rank}\,{\bf S}=\mathrm{rank}\,\widetilde{\bf S}+2(h-1)$ we obtain $\mathrm{rank}\,{\bf S}=2(h+1)$
and thus the ASSUMPTION II of Theorem \ref{main} is satisfied. Moreover, $\widetilde{\mathbf{S}}^{-1}$ exists. Therefore
$$
(c_{0,1},\,c_{1,1},\,c_{0,2},\,c_{1,2})=(1,\,0,\,0,\,0)\,\left[\widetilde{\bf S}^{-1}\right]^T.
$$

Finally, we conclude that the recurrence has the following form:
$$
\hat{\mu}_t = a_1 \hat{\mu}_{t-1} +a_2 \hat{\mu}_{t-2} +\r_0^T\, \X_{t}+\r_1^T\, \X_{t-1} +\r_2^T\, \X_{t-2},
$$
where
$$
    \left\{\begin{array}{l}
    a_1 = d_1 + d_2 \vspace{5pt} \\
    a_2 = - d_1 d_2 \vspace{5pt} \\
    \r_0 = {\bf N}(d_1)\left[(c_{0,1}+c_{1,1})\1 -c_{1,1}\ep\,\right]+{\bf N}(d_2)\left[(c_{0,2}+c_{1,2})\1 -c_{1,2}\ep\,\right]
     \vspace{5pt} \\
    \r_1 = -(d_2{\bf I}+{\bf C}^T)\,{\bf N}(d_1)\left[(c_{0,1}+c_{1,1})\1 -c_{1,1}\ep\right]-(d_1{\bf I}+{\bf C}^T)\,{\bf N}(d_2)\left[(c_{0,2}+c_{1,2})\1 -c_{1,2}\ep\right]
     \vspace{5pt} \\
    \r_2 = d_2{\bf C}^T\,{\bf N}(d_1)\left[(c_{0,1}+c_{1,1})\1 -c_{1,1}\ep\,\right]+d_1{\bf C}^T\,{\bf N}(d_2)\left[(c_{0,2}+c_{1,2})\1 -c_{1,2}\ep\,\right]
    \end{array}\right.
$$

For example, let $N=7$, $h=2$, $H=\{3,6\}$ and let $\rho=0.5$. Then
$$
    Q_2(x) = -1.6 x^2 - 2x + 5.75
$$
and
$$
    \left\{\begin{array}{l}
    x_1 = -2.6211 \\
    x_2 = \hspace{6pt} 1.3711
    \end{array}\right.
    \quad \Rightarrow \quad
    {\setlength{\arraycolsep}{2pt}
    \left\{\begin{array}{r c l}
    d_+(x_1) & = & -5.0439 \\
    d_1=d_-(x_1) & = & -0.1983 \vspace{5pt} \\
    d_+(x_2) & = & \hspace{6pt} 2.3091 \\
    d_2=d_-(x_2) & = & \hspace{6pt} 0.4331
    \end{array}\right.}
    \quad \Rightarrow \quad
    \left\{\begin{array}{l}
    a_1 = 0.2348 \\
    a_2 = 0.0859
    \end{array}\right.
$$

Finally, \eqref{rec} assumes the form
$$
    {\setlength{\arraycolsep}{2pt}
    \begin{array}{r c l}
    \hat{\mu}_t & = & 0.2348\ \hat{\mu}_{t-1} + 0.0859\ \hat{\mu}_{t-2}
    + \vspace{5pt}\\
    & + & \left[\begin{array}{r}
    0.2171 \\
    0.1904 \\
    0.0000 \\
    0.2171 \\
    0.1904 \\
    0.0000 \\
    0.1850
    \end{array}
    \right]^T \hspace{-5pt}\X_{t} + \left[\begin{array}{r}
    -0.0093 \\
    -0.1086 \\
    0.0000 \\
    -0.0093 \\
    -0.1086 \\
    0.0000 \\
    0.0010
    \end{array}
    \right]^T \hspace{-5pt}\X_{t-1} + \left[\begin{array}{r}
    0.0000 \\
    0.0047 \\
    0.0000 \\
    -0.0476 \\
    0.0047 \\
    0.0000 \\
    -0.0476
    \end{array}
    \right]^T \hspace{-5pt}\X_{t-2}
    \end{array}}
$$

\subsection{Szarkowski's scheme, $p=3$.}
If there are $h_2$ gaps of size 2 and $h_1$ gaps of size 1 in the cascade pattern  the polynomial $Q_p=Q_3$, see \eqref{Qp}, assumes the form
$$
Q_3(x)=(N-1)(1+\rho^2-2\rho x) + 1-\rho^2 - (1+\rho^2-2\rho x)^2 \left(h_2 \frac{2\rho x+ 2(1+\rho^2)}{1+\rho^2+\rho^4}
    + h_1 \frac{1}{1+\rho^2} \right).
$$
The Szarkowski's scheme is defined by the cascade pattern $\ep=(1,1,0,0,1,1)^T$ (often denoted also as $2-2-2$), used e.g. by the Central
Statistical Office of Poland for conducting the Labour Force Survey
(known under the label BAEL), see Szarkowski and Witkowski (1994) or Popi\'nski (2006). Actually, such scheme is used also in LFS in other countries in Europe as well. Here $N=6$ and $H=\{3,4\}$. Thus $h_2=1$,  $h_1=0$ and
\bel{sz}
Q_3(x) = 5(1+\rho^2-2\rho x) + 1-\rho^2 -
        2(1+\rho^2-2\rho x)^2 \frac{\rho x+1+\rho^2}{1+\rho^2+\rho^4}.
\ee

Weso\l owski (2010)  proved that in this case $Q_3$ is either strictly increasing or decreasing in the whole domain and has
two complex conjugate roots $x_1$, $x_2$, and one real root $x_3\not\in[-1,1]$, meaning that the ASSUMPTION I of Theorem \ref{main} holds. It was also shown in that paper that the
matrix $\mathbf{S}$, in this case of dimensions $9\times 9$,
is invertible (meaning that the ASSUMPTION II of Theorem \ref{main} holds). Thus, just as for $p=1,2$, the recurrence \eqref{rec} for
Szarkowski's scheme always holds.

In general, even in the case $p=3$, verification of  ASSUMPTIONs I and II
of Theorem \ref{main} has to be done numerically, i.e.
after assigning the value to the correlation coefficient $\rho$. However,
it is worth noting that all performed simulations confirm existence
of the solution. Asymptotic approximation of the "classical" model
parameters was also observed in numerical experiments we performed.

The coefficients $a_1,a_2,a_3$ depend on $d_1=d_-(x_1)$, $d_2=d_-(x_2)=d_1^*$ and $d_3=d_-(x_3)$ in the
following way (see \eqref{ak}):
$$
    \left\{ \begin{array}{l}
    a_1 = d_1 + d_2 + d_3 \\
    a_2 = -(d_1d_2 + d_2d_3 + d_1d_3) \\
    a_3 = d_1 d_2 d_3
    \end{array} \right..
$$
For the Szarkowski scheme, taking for instance $\rho=0.7$ in \eqref{sz}, we obtain
$$
    \left\{\begin{array}{l}
    x_1 = -0.5668 -1.4069 \imath \\
    x_2 = -0.5668 +1.4069 \imath \\
    x_3 = \hspace{6pt} 1.1336
    \end{array}\right.
    \ \ \Rightarrow \ \
    {\setlength{\arraycolsep}{2pt}
    \left\{\begin{array}{r c l}
    d_+(x_1) & = & -1.0368 -3.1035 \imath \\
    d_1=d_-(x_1) & = & -0.0968 +0.2899 \imath \vspace{3pt} \\
    d_+(x_2) & = & -1.0368 +3.1035 \imath \\
    d_2=d_-(x_2) & = & -0.0968 -0.2899 \imath \vspace{3pt} \\
    d_+(x_3) & = & \hspace{6pt} 1.6675\\
    d_3=d_-(x_3) & = & \hspace{6pt} 0.5997
    \end{array}\right.}
    \ \ \Rightarrow \ \
    \left\{\begin{array}{l}
    a_1 = 0.4060 \\
    a_2 = 0.0227 \\
    a_3 = 0.0560
    \end{array}\right..
$$
Due to Theorem \ref{main} we get the following form of \eqref{rec}:
$$
    {\setlength{\arraycolsep}{2pt}
    \begin{array}{r c l}
    \hat{\mu}_t & = &
    0.4060\ \hat{\mu}_{t-1} + 0.0227\ \hat{\mu}_{t-2} + 0.0560\ \hat{\mu}_{t-3}
    + \vspace{5pt}\\
    & + & \left[\begin{array}{r}
    0.2862 \\
    0.2217 \\
    0.0000 \\
    0.0000 \\
    0.2862 \\
    0.2059
    \end{array}
    \right]^T \hspace{-5pt}\X_{t} + \left[\begin{array}{r}
    -0.0036 \\
    -0.2004 \\
    0.0000 \\
    0.0000 \\
    -0.0036 \\
    -0.1984
    \end{array}
    \right]^T \hspace{-5pt}\X_{t-1} + \left[\begin{array}{r}
    -0.0143 \\
    0.0026 \\
    0.0000 \\
    0.0000 \\
    -0.0143 \\
    0.0033
    \end{array}
    \right]^T \hspace{-5pt}\X_{t-2} + \left[\begin{array}{r}
    0.0000 \\
    0.0100 \\
    0.0000 \\
    0.0000 \\
    -0.0760 \\
    0.0100
    \end{array}
    \right]^T \hspace{-5pt}\X_{t-3}
    \end{array}}
$$

\subsection{CPS scheme, $p=9$}
Let us consider the well-known and widely studied 4-8-4 scheme, that is the cascade pattern is $$\ep=(1,1,1,1,0,0,0,0,0,0,0,0,1,1,1,1)^T$$ which is used in the US in the Current Population Survey, see U.S. Bureau of Census (2002). In this case $N=16$, $h=8$ and
$H=\{5,\ldots,12\}$.  We do not have any analytical proof that ASSUMPTIONs I and II are satisfied in this scheme for any $\rho$.

The polynomial $Q_p=Q_9$, see \eqref{Qp}, is of degree 9 and has the form
$$
Q_9(x)=15(1+\rho^2-2\rho x)+1-\rho^2-(1+\rho^2-2\rho x)^2\,\mathrm{tr}\left({\bf T}_8(x)\,{\bf R}_8^{-1}\right).
$$

Consequently, its analysis, as well as analysis of matrix ${\bf S}$ (which is of dimension $81\times 81$ in this scheme), can be done numerically, after assigning some value for $\rho$. To make use of the result of Theorem \ref{main} we need to check numerically that ASSUMPTIONs I and II are satisfied for a given concrete value for $\rho$. We checked that they hold true for several values for $\rho$ picked up at random from the interval $(-1,1)$.

Taking for instance $\rho=0.9$, we obtain that $Q_9$ has 8 complex roots and 1 real root of the form
$$
    \left\{\begin{array}{l}
    x_1 = -0.7667 - 0.0208 \imath \\
    x_2 = -0.7667 + 0.0208 \imath \\
    x_3 = -0.1746 - 0.0320 \imath \\
    x_4 = -0.1746 + 0.0320 \imath \\
    x_5 = \hspace{6pt} 0.4989 - 0.0284 \imath \\
    x_6 = \hspace{6pt} 0.4989 + 0.0284 \imath \\
    x_7 = \hspace{6pt} 0.9391 - 0.0121 \imath \\
    x_8 = \hspace{6pt} 0.9391 + 0.0121 \imath \\
    x_9 = -1.0006
    \end{array}\right.
    \ \ \Rightarrow \ \
    {\setlength{\arraycolsep}{2pt}
    \left\{\begin{array}{r c l}
    d_1=d_-(x_1) & = & -0.7419 - 0.6220 \imath \\
    d_2=d_-(x_2) & = & -0.7419 + 0.6220 \imath \\
    d_3=d_-(x_3) & = & -0.1689 - 0.9532 \imath \\
    d_4=d_-(x_4) & = & -0.1689 + 0.9532 \imath \\
    d_5=d_-(x_5) & = & \hspace{6pt} 0.4825 - 0.8389 \imath \\
    d_6=d_-(x_6) & = & \hspace{6pt} 0.4825 + 0.8389 \imath \\
    d_7=d_-(x_7) & = & \hspace{6pt} 0.9064 - 0.3335 \imath \\
    d_8=d_-(x_8) & = & \hspace{6pt} 0.9064 + 0.3335 \imath \\
    d_9=d_-(x_9) & = & -0.9682
    \end{array}\right.}
    \ \ \Rightarrow \ \
    \left\{\begin{array}{l}
    a_1 = 0.7429 \\
    a_2 = 0.0019 \\
    a_3 = 0.0023 \\
    a_4 = 0.0029 \\
    a_5 = 0.0037 \\
    a_6 = 0.0049 \\
    a_7 = 0.0066 \\
    a_8 = 0.0088 \\
    a_9 = 0.0119
    \end{array}\right..
$$

The coefficient $a_1$ is dominant in terms of absolute value. The
second largest, $a_9$ is smaller by one order of magnitude and the
other coefficients by at least two. Results for other values of the
parameter $\rho$ behave similarly.

\section{Discussion} The main result of the paper is an explicit recurrence formula for the best linear unbiased estimator (BLUE) of the mean on any occasion in repeated surveys with any cascade rotation pattern. The principal novelty lies in allowing for gaps in the pattern. The results which have been known earlier either dealt with patterns with no gaps or with estimators which were not BLUEs. The approach, we developed, is heavily based on algebra of matrices and linear operators of infinite dimension as well as on properties of Chebyshev polynomials. Unfortunately, the explicit recursive formula we obtained in Theorem \ref{main} needs two, seemingly technical, assumptions: ASSUMPTION I on localization of roots of a polynomial $Q_p$ and ASSUMPTION II on rank of matrix ${\bf S}$. It is worth to emphasize that both these objects, $Q_p$ and ${\bf S}$, depend ONLY on two parameters; the rotation pattern $\ep$ and  the correlation coefficient $\rho$. It is known that these two assumptions are satisfied if the coverage of the pattern $p=1$ or $p=2$ for any cascade scheme and  $p=3$ for 2-2-2 scheme. It is not known if they are satisfied in general. However numerical experiments allow to formulate a conjecture that this is really the case. In these experiments we considered many different rotation patterns. For each such a pattern we considered several values for $\rho\in(-1,1)$. Having the rotation pattern $\ep$ and the value of $\rho$ chosen, we built respective polynomial $Q_p$ and matrix ${\bf S}$. Numerically we looked for roots of $Q_p$. Often these roots were complex, but when they were real they were located outside of the interval $(-1,1)$ in all the experiments (that is, ASSUMPTION I was satisfied). Then we tried to solve numerically the equation ${\bf S}\c = (1,0,\ldots,0)\in\R^{ph+h+1}$. Again, in all the experiments we obtained the unique solution, meaning that ${\bf S}$ was of full rank (that is, ASSUMPTION II was also satisfied). We do believe that both the assumptions are always satisfied but a mathematical proof of both these facts is probably hard, though a paper with the proof that ASSUMPTION I is satisfied for any cascade pattern  with a single gap of any size and any $\rho\in(-1,1)$ is under preparation.

There is other type of limitations of the method we propose - they are due to the model constraints. In particular, in the model the correlations are exponential (as in the original Patterson model). This property is very important for the argument we use, e.g. it makes the covariance matrix ${\bf C}$ nilpotent of degree $N$, that is $N$ is the smallest value of $j$ such that ${\bf C}^j=0$. Moreover, it has been observed (see Example 4.5 in Kowalski, 2009) that other covariance models may lead to major difficulties in analysis of the formula for the variance of the estimators. There is a possibility that some reasonable departures from the exponential correlation assumption, as e.g. $\cov(X_{i,j},X_{k,l})=\theta+(1-\theta)\rho^{|j-l|}\delta_{i,k}$ for a $\theta\in[0,1]$ (see Lent et al. (1999), in particular their Table 1, its discussion as well as additional references) can lead to treatable formulas for the variance. Such a covariance model is probably the first one to look at in any future research aiming at extension of the model.

In the model we also assumed that expectations on a given occasion are all the same and depend only on the occasion number: $\E\,X_{i,j}=\mu_j$. However other models containing may be of interest, e.g. $\E\,X_{i,j}=\mu_j+a_i$ (see Bailar, 1975). Here the adjustments $a_i$ can be understood as time-in-sample-bias caused by the number of occasions in which unit $i$ participated in the survey. Of course, if $a_i$ is known, there is no problem: just adjust $X_{i,j}$ by subtracting $a_i$ and use the approach we developed. If it is not known, the operational (but not mathematical) solution would be to adjust $X_{i,j}$'s with suitable estimators of $a_i$'s (obtained outside the model we analyze). The exact mathematical solution is not known and is worth to pursue.

Another aspect, which is of interest within the model considered in this paper, is the question of recurrence for the BLUE of a change of the mean $\mu_t-\mu_{t-1}$. We do believe that this question can be approached through the methods developed in this paper. Nevertheless, we expect it will need a lot of work in careful adaptations of the algebraic techniques used  above.

It is worth also to mention that the model considered in the paper has an infinite time horizon, why there is always finite number of occasions in real surveys. As already mentioned in Introduction, the results we obtained seem to be reasonable approximation of the finite horizon case, when coefficient of recursion \eqref{eqn_rekur} depend on $t$. In particular, numerical experiments, performed for a wide range of $\rho\in(-1,1)$ and several different cascade patterns $\eps$, show that e.g. the value of the coefficients $a_i^{(t)}$ (for the finite horizon) was roughly the same as $a_i$ (for the infinite horizon) already for $t\approx 10$. The same behavior was observed for the variances of the estimators. Nevertheless, the convergence has been mathematically established only in the case $p=1$. Analytical bounds for the speed of convergence at present seem also to be out of reach.

It is interesting to know how the estimators, obtained here, work in real surveys. Such question needs access to real data and gaining some interest of practitioners in the theoretical solutions we proposed. Very likely the exact formulas given in Theorem \ref{main} may need some adjustments due to the discussed limitations of the model.

\vspace{5mm}
\section{Appendix}
\subsection{Algebra of shift operators}\label{App1}
In the first part of Appendix we introduce and analyze an algebraic operator formalism which is crucial for the proof of our main result (given in Subsection \ref{App2}).

For a sequence of vectors $\overline{\x}=(\x_0,\x_1,\x_2,\ldots)$, $\x_i\in\R^N$, define shifts to the left and to the right by
$$\Le(\overline{\x})=(\x_1,\x_2,\x_3,\ldots)\qquad \mbox{left shift},$$
$$\Ri(\overline{\x})=(\0,\x_0,\x_1,\ldots)\qquad \mbox{right shift}.$$
Note that $\Le\,\Ri=\Id$ (identity), but
\bel{irl}
(\Id-\Ri\,\Le)\,\overline{\x}=(\x_0,\,\0,\,\0,\,\ldots)=\x_0\overline{e},
\ee
where $\overline{e}=(1,\,0,\,0,\,\ldots)$.

For any $M\times N$ matrix ${\bf A}$ define
$$
{\bf A}\,\overline{\x}=({\bf A}\,\x_0,\,{\bf A}\,\x_1,\,{\bf A}\,\x_2,\,\ldots).
$$
In particular, for a complex (real) number $a$, taking ${\bf A}=a\,{\bf I}$ we have
$$
a\,\overline{\x}=(a\,\x_0,\,a\,\x_1,\,a\,\x_2,\,\ldots).
$$

Moreover, by the above definitions, for any $i,j\ge 0$
$$
\Ri^i\,\Le^j\,{\bf A}\,\overline{\x}={\bf A}\,\Ri^i\,\Le^j\,\overline{\x}.
$$

For a constant sequence of vectors $\overline{\x}=(\x,\x,\x,\ldots)$ we have $\Le\,\overline{\x}=\overline{\x}$
and thus for any $i,j\ge 0$
\bel{lirj}
\Le^i\Ri^j\,\overline{\x}=\left\{\begin{array}{ll}
\overline{\x}, & \mbox{for}\;i\ge j, \\
\; & \; \\
\Ri^{j-i}\,\overline{\x}, & \mbox{for}\;i<j.
\end{array}\right.
\ee

If $N=1$ we write $\overline{\y}=\overline{y}=(y_0,y_1,y_2\ldots)$, $y_i\in\R$, and $\mathrm{L}:=\Le$, $\mathrm{R}:=\Ri$.
Note that, for $\overline{y}=(y^n)_{n\ge 0}$ we have \bel{Ly}\mathrm{L}^j\,\overline{y}=y^j\,\overline{y}\ee
and thus
$$
\mathrm{L}^j\,\mathrm{R}^i\,\overline{y}=\left\{\begin{array}{ll}
y^{j-i}\,\overline{y}, & \mbox{for}\;j\ge i, \\
\; & \; \\
\mathrm{R}^{i-j}\,\overline{y}, & \mbox{for}\;j<i.
\end{array}\right.
$$

For any $\overline{y}=(y_n)_{n\ge 0}$ and any $\overline{\x}=(\x_n)_{n\ge 0}$ define $\overline{y}\,\overline{\x}=(y_n\,\x_n)_{n\ge 0}$.
Then for any complex (real) numbers $\alpha$, $\beta$, any $M\times N$ matrices $\bf A$, $\bf B$, any $i,j,k,m\ge 0$,
\begin{equation}\label{AB}
\left(\alpha\,{\bf A}\,\Ri^i\,\Le^j+\beta\,{\bf B}\,\Le^m\,\Ri^k\right)\,\overline{y}\,\overline{\x}
=\left(\alpha\,\mathrm{R}^i\,\mathrm{L}^j\,\overline{y}\right)\,\left({\bf A}\,\Ri^i\,\Le^j\,\overline{\x}\right)
+\left(\beta\,\mathrm{L}^m\,\mathrm{R}^k\,\overline{y}\right)\,\left({\bf B}\,\Le^m\,\Ri^k\,\overline{\x}\right).
\end{equation}

Note also that if $\overline{\x}=(\x,\,\x,\,\ldots)$ is a constant sequence, then
\bel{rilj}
\Ri^i\,\Le^j\,\overline{y}\,\overline{\x}=\left(\mathrm{R}^i\,\mathrm{L}^j\,\overline{y}\right)\,\overline{\x}\qquad\mbox{and}\qquad \Le^j\,\Ri^i\,\overline{y}\,\overline{\x}=\left(\mathrm{L}^j\,\mathrm{R}^i\,\overline{y}\right)\,\overline{\x}.
\ee

\begin{lemma}
Let $v_i$, $i=1,\ldots,p$, be functions defined in \eqref{vid}, where $a_1,\ldots,a_p$ are arbitrary numbers. Let $\overline{\x}=(\x,\,\x,\,\ldots)$ and $\overline{y}=(y^n)_{n\ge 0}$. Then
for any $i=1,\ldots,p$
\bel{zerid}
\Le^i\left(\Id-\sum_{j=1}^p\,a_j\Ri^j\right)=\left(\Le^p-\sum_{j=1}^p\,a_j\Le^{p-j}\right)\Ri^{p-i},
\ee
\bel{firstid}
(\Id-\Ri\Le)\left(\Le^p-\sum_{j=1}^p\,a_j\Le^{p-j}\right)\Ri^{p-i}\,\overline{y}\,\overline{\x}=v_i(y)(\x_0,\0,\0,\ldots)
\ee
and
\bel{secondid}
\left(\Le^p-\sum_{j=1}^p\,a_j\Le^{p-j}\right)\,\overline{y}\,\overline{\x}=v_p(y)\,\overline{y}\,\overline{\x}.
\ee
\end{lemma}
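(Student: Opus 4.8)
The plan is to treat all three identities as bookkeeping with the two shift operators, exploiting the single structural asymmetry between $\Le\,\Ri=\Id$ and $(\Id-\Ri\,\Le)\,\overline{\x}=\x_0\,\overline{e}$ recorded in \eqref{irl}. I would first record two preliminary facts. From $\Le\,\Ri=\Id$, by repeatedly cancelling the pair $\Le\,\Ri$ from the middle of a product, one gets $\Le^a\,\Ri^b=\Le^{a-b}$ when $a\ge b$ and $\Le^a\,\Ri^b=\Ri^{b-a}$ when $a\le b$; in either case $\Le^a\,\Ri^b$ depends only on $a-b$. Secondly, for the constant vector sequence $\overline{\x}=(\x,\x,\ldots)$ and the geometric scalar sequence $\overline{y}=(y^n)_{n\ge 0}$, combining \eqref{rilj} (with no right shift) and \eqref{Ly} yields $\Le^k(\overline{y}\,\overline{\x})=y^k\,\overline{y}\,\overline{\x}$, while $\Ri^j\,\overline{y}\,\overline{\x}$ is just $\overline{y}\,\overline{\x}$ with $j$ zero vectors prepended.

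First I would prove \eqref{zerid}, which is an operator identity involving no sequences. Expanding the right-hand side, the leading term is $\Le^p\,\Ri^{p-i}=\Le^i$ (since $p\ge p-i$), which matches $\Le^i\,\Id$ on the left; and the generic term $-a_j\,\Le^{p-j}\,\Ri^{p-i}$ has exponent difference $(p-j)-(p-i)=i-j$, the same as that of $\Le^i\,\Ri^j$ appearing on the left. By the first preliminary fact the two operators agree term by term. The hypothesis $1\le i\le p$ is exactly what keeps the exponents $p-i$ and $p-j$ nonnegative throughout.

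Next I would deduce \eqref{secondid} by applying $\Le^k(\overline{y}\,\overline{\x})=y^k\,\overline{y}\,\overline{\x}$ with $k=p$ and with $k=p-j$, $j=1,\dots,p$: the left-hand side becomes $\bigl(y^p-\sum_{j=1}^p a_j y^{p-j}\bigr)\overline{y}\,\overline{\x}$, and the scalar factor is $v_p(y)$ by the definition \eqref{vid}. For \eqref{firstid} I would first rewrite, via \eqref{zerid}, the operator $\bigl(\Le^p-\sum_j a_j\Le^{p-j}\bigr)\Ri^{p-i}$ as $\Le^i\bigl(\Id-\sum_j a_j\Ri^j\bigr)$. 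Since $\Ri^j\,\overline{y}\,\overline{\x}$ prepends $j$ zeros, the $n$-th term of $\bigl(\Id-\sum_j a_j\Ri^j\bigr)\overline{y}\,\overline{\x}$ equals $\bigl(y^n-\sum_{1\le j\le\min(n,p)}a_j y^{n-j}\bigr)\x$; applying $\Le^i$ brings the term with $n=i$ into position $0$, so the zeroth term of $\Le^i\bigl(\Id-\sum_j a_j\Ri^j\bigr)\overline{y}\,\overline{\x}$ is $\bigl(y^i-\sum_{j=1}^i a_j y^{i-j}\bigr)\x=v_i(y)\,\x$, where $i\le p$ is used so that $\min(i,p)=i$. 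Finally, by \eqref{irl}, applying $\Id-\Ri\,\Le$ returns precisely this zeroth term placed at the front followed by zeros; since $\x=\x_0$ this is $v_i(y)(\x_0,\0,\0,\ldots)$, as claimed.

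I do not anticipate a genuine obstacle: the whole lemma is combinatorial once the operator algebra of the preliminary step is in place. The one point demanding care is tracking which of $\Le$ or $\Ri$ survives in a product $\Le^a\Ri^b$, together with the truncation $\min(n,p)$ that it forces in the sums above. That is exactly the place where the failure of $\Ri\,\Le=\Id$ makes itself felt, and it is what is responsible for the finite right-hand side of \eqref{firstid} — the single vector $v_i(y)\x$ at position $0$ — rather than an expression running over the entire sequence.
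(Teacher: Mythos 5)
Your proposal is correct and follows essentially the same route as the paper: both arguments are direct bookkeeping with the shift calculus, using $\Le\,\Ri=\Id$ (so that $\Le^a\Ri^b$ depends only on $a-b$) for \eqref{zerid}, the geometric action of the left shift on $\overline{y}$ for \eqref{secondid}, and the extraction identity \eqref{irl} to isolate the zeroth entry $v_i(y)\,\x_0$ in \eqref{firstid}. The only cosmetic difference is that you reuse \eqref{zerid} in reverse and read off entries of the product sequence directly, whereas the paper first factors out the constant sequence $\overline{\x}$ and computes on the scalar sequence $\overline{y}$; the content is the same.
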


\begin{proof} First, we prove \eqref{secondid}. By \eqref{AB}
$$
\left(\Le^p-\sum_{j=1}^p\,a_j\Le^{p-j}\right)\,\overline{y}\,\overline{\x}=(\mathrm{L}^p\,\overline{y})\,\Le^p\overline{\x}
-\sum_{j=1}^p\,a_j(\mathrm{L}^{p-j}\overline{y})\,(\Le^{p-j}\overline{\x})
$$
Note that $\mathrm{L}^k\overline{y}=y^k\overline{y}$ and $\Le^k\overline{\x}=\overline{\x}$ for any $k=0,1,\ldots$. Therefore
$$\left(\Le^p-\sum_{j=1}^p\,a_j\Le^{p-j}\right)\,\overline{y}\,\overline{\x}=\left[\left(y^p-\sum_{m=1}^p\,a_m\,y^{p-m}\right)\overline{y}\right]\,\overline{\x}.
$$
Now \eqref{secondid} follows by the definition \eqref{vid} for $i=p$.

Again, from \eqref{lirj}, \eqref{AB} and \eqref{rilj} it follows that
$$
(\Id-\Ri\Le)\left(\Le^p-\sum_{j=1}^p\,a_j\Le^{p-j}\right)\Ri^{p-i}\,\overline{y}\,\overline{\x}=
\left[(\mathrm{I}-\mathrm{R}\mathrm{L})\,\left(\mathrm{L}^p-\sum_{j=1}^p\,a_j\mathrm{L}^{p-j}\right)\mathrm{R}^{p-i}\,\overline{y}\right]\,\overline{\x}.
$$
Since for any $k\in\{0,1,\ldots,p\}$
$$
\left(\mathrm{L}^p-\sum_{j=1}^p\,a_j\,\mathrm{L}^{p-j}\right)\,\mathrm{R}^{p-k}\,\overline{y}
=y^k\,\overline{y}-\sum_{j=1}^k\,a_j\,y^{k-j}\,\overline{y}-\sum_{j=k+1}^p\,a_j\,\mathrm{R}^{j-k}\,\overline{y}=v_k(y)\overline{y}-\sum_{j=k+1}^p\,a_j\,\mathrm{R}^{j-k}\,\overline{y}
$$
then
$$(\mathrm{I}-\mathrm{R}\,\mathrm{L})\,\left(\mathrm{L}^p-\sum_{j=1}^p\,a_j\,\mathrm{L}^{p-j}\right)\,\mathrm{R}^{p-k}\,\overline{y}
=v_k(y)\,\overline{e}
$$
and thus \eqref{firstid} follows.

The identity \eqref{zerid} follows by \eqref{lirj} since
$$
\Le^i\left(\Id-\sum_{j=1}^p\,a_j\Ri^j\right)=\Le^i-\sum_{j=1}^p\,a_j\,\Le^i\Ri^j=
\Le^p\Ri^{p-i}-\sum_{j=1}^p\,a_j\Le^{p-j}\Ri^{p-i}.
$$
\end{proof}

\begin{lemma}\label{delt}
Let $\D$ be an operator on the space of sequences of vectors from $\R^N$ defined by
\bel{Dop}
\D=\Id+\sum_{k=1}^{N-1}\,\left({\bf C}^k\,\Le^k+({\bf C}^T)^k\,\Ri^k\right),
\ee
where ${\bf C}$ is the covariance matrix defined in Section \ref{model}.

The operator $\D$ is invertible and
\bel{invD}
\D^{-1}=(\Id-{\bf C}^T\Ri)\bD(\Id-{\bf C}\Le).
\ee
\end{lemma}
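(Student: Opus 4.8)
The plan is to exhibit $\D$ as a product of three manifestly invertible operators and simply read off the inverse; the right factorization is dictated by the nilpotency of ${\bf C}$. I would first introduce the abbreviations $P={\bf C}\Le$ and $Q={\bf C}^T\Ri$. Since matrices commute with the shifts, $P^k={\bf C}^k\Le^k$ and $Q^k=({\bf C}^T)^k\Ri^k$, and because ${\bf C}^N=0$ (hence also $({\bf C}^T)^N=0$) we get $P^N=Q^N=0$. Therefore the \emph{finite} sums
\[
A:=\sum_{k=0}^{N-1}P^k,\qquad B:=\sum_{k=0}^{N-1}Q^k
\]
telescope against $\Id-P$ and $\Id-Q$ from either side, so $A=(\Id-P)^{-1}$ and $B=(\Id-Q)^{-1}$; in particular $\Id-P$ and $\Id-Q$ are invertible, and from $A=\Id+AP$ and $B=\Id+QB$ one records the two identities $AP=A-\Id$ and $QB=B-\Id$. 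Matching terms in \eqref{Dop} (again using ${\bf C}^N=0$ to cut the series off at $N-1$) gives the representation $\D=A+B-\Id$, which is the only thing I would need to know about $\D$ itself.

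The second ingredient is a single algebraic relation between the two shift directions. Using $\Le\Ri=\Id$ and $\Le{\bf C}^T={\bf C}^T\Le$ one gets $PQ={\bf C}\Le{\bf C}^T\Ri={\bf C}{\bf C}^T$. I would also note here that ${\bf I}-{\bf C}{\bf C}^T$ is invertible (concretely ${\bf C}{\bf C}^T=\rho^2\,\mathrm{diag}(1,\ldots,1,0)$, and $\rho^2\neq1$), so that $\bD=({\bf I}-{\bf C}{\bf C}^T)^{-1}$ is legitimate and $\bD^{-1}={\bf I}-{\bf C}{\bf C}^T$.

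With these in hand the proof is a three-line computation. Using $PQ={\bf C}{\bf C}^T$,
\[
A\,\bD^{-1}B=A({\bf I}-{\bf C}{\bf C}^T)B=AB-A(PQ)B=AB-(AP)(QB),
\]
and then, substituting $AP=A-\Id$ and $QB=B-\Id$,
\[
A\,\bD^{-1}B=AB-(A-\Id)(B-\Id)=A+B-\Id=\D .
\]
Since $A$, $\bD^{-1}$ and $B$ are all invertible, this shows that $\D$ is invertible and
\[
\D^{-1}=B^{-1}\bD A^{-1}=(\Id-Q)\,\bD\,(\Id-P)=(\Id-{\bf C}^T\Ri)\,\bD\,(\Id-{\bf C}\Le),
\]
which is exactly \eqref{invD}.

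There is no deep obstacle, but the point to get right — and the reason the factorization must be written in precisely this order — is the asymmetry of the two shifts: $\Le\Ri=\Id$ while $\Ri\Le\neq\Id$, the two differing by the rank-one projection onto the first coordinate, cf. \eqref{irl}. Consequently $QP\neq{\bf C}^T{\bf C}$ in general, and only the product in the order $PQ$ collapses to a matrix; this is why $A{\bf C}{\bf C}^TB$ must be grouped as $(AP)(QB)$ and not otherwise, and why the companion reductions $AP=A-\Id$, $QB=B-\Id$ are applied on the correct side. If one preferred to avoid the factorization argument, the same facts yield a direct verification of $\D\cdot(\Id-{\bf C}^T\Ri)\bD(\Id-{\bf C}\Le)=\Id$ together with the mirror identity, but going through $\D=A\bD^{-1}B$ sidesteps having to treat the two one-sided checks separately.
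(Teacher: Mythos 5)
Your proof is correct and is essentially the paper's own argument: both verify that $(\Id-{\bf C}\Le)^{-1}\bD^{-1}(\Id-{\bf C}^T\Ri)^{-1}=\Bigl(\sum_{k=0}^{N-1}{\bf C}^k\Le^k\Bigr)({\bf I}-{\bf C}{\bf C}^T)\Bigl(\sum_{j=0}^{N-1}({\bf C}^T)^j\Ri^j\Bigr)$ equals $\D$, using the nilpotency of ${\bf C}$, commutation of matrices with shifts, and $\Le\Ri=\Id$. The only difference is bookkeeping: you collapse the product via the telescoping identities $AP=A-\Id$, $QB=B-\Id$ together with $PQ={\bf C}{\bf C}^T$, whereas the paper expands the double sum explicitly and cancels the cross terms through the factor $\Le\Ri-\Id=0$.
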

\begin{proof}
Note that ${\bf I}-{\bf C}\,{\bf C}^T=\mathrm{diag}(1-\rho^2,\ldots,1-\rho^2,1)$. Consequently, $\bD=\left({\bf I}-{\bf C}\,{\bf C}^T\right)^{-1}$ is well defined.
Note also that $\sum_{k=0}^{N-1}\,{\bf C}^k\Le^k$ is invertible and its inverse is $\Id-{\bf C}\Le$. Similarly, $\sum_{k=0}^{N-1}\,({\bf C}^T)^k\Ri^k$ is invertible and its inverse is $\Id-{\bf C}^T\Ri$.

Therefore
$$[(\Id-{\bf C}^T\Ri)\bD(\Id-{\bf C}\Le)]^{-1}=(\Id-{\bf C}\Le)^{-1}\bD^{-1}(\Id-{\bf C}^T\Ri)^{-1}=
\left(\sum_{k=0}^{N-1}\,{\bf C}^k\Le^k\right)\,({\bf I}-{\bf C}{\bf C}^T)\,\left(\sum_{j=0}^{N-1}\,({\bf C}^T)^j\Ri^j\right)$$$$
=\sum_{k,j=0}^{N-1}\,{\bf C}^k({\bf C}^T)^j\,\Le^k\Ri^j\,-\,\sum_{k,j=1}^{N-1}\,{\bf C}^k({\bf C}^T)^j\,\Le^k\Ri^j
=\D+\sum_{k,j=1}^{N-1}\,{\bf C}^k({\bf C}^T)^j\,\Le^{k-1}(\Le\Ri-\Id)\Ri^{j-1}=\D.
$$
\end{proof}

\subsection{Proof of the recurrence}\label{App2}
\begin{proof}[Proof of Theorem \ref{main}]
Note first that since $d_1,\ldots,d_p$ are either real or come in conjugate pairs (see Remark \ref{quadr}) it follows from \eqref{ak} that $a_1,\ldots,a_p$ are real numbers.

Recall that $\e_0=\1$ and denote $\overline{\e}_j=(\e_j,\,\e_j,\,\ldots)$, $j\in H'=\{0\}\cup H$. Recall that the $N\times N$ diagonal matrix $\bD$ is defined as
$$\bD=({\bf I}-{\bf C}\,{\bf C}^T)^{-1}=\tfrac{1}{1-\rho^2}\mathrm{diag}(1,\,\ldots,\,1,\,1-\rho^2).$$

With $d_1,\ldots,d_p$ and $\c$ as defined in Theorem \ref{main} let (see \eqref{invD})
\bel{overw}
\overline{\w}=(\w_0,\,\w_1,\,\ldots)=\D^{-1}\,\sum_{m=1}^p\,\sum_{j\in H'}\,c_{j,m}\,\overline{d}_m\,\overline{\e}_j,
\ee
where $\overline{d}_m=(1,\,d_m,\,d_m^2,\,\ldots)$, $m=1,\ldots,p$. Note that $||\w_i||$ (the length of the vector $\w_i$) is of order $(\max_{1\le m\le p}\,|d_m|)^i$, $i=0,1,\ldots$. By Remark \ref{quadr} and ASSUMPTION II we have $\max_{1\le m\le p}\,|d_m|\in (0,1)$. Hence \eqref{estyma} is a correct definition of a random series (with bounded variance).

Consequently, it suffices to show that:
\begin{enumerate}[{\bf 1.}]
\item The sequence $\overline{\w}$ defined in \eqref{overw} is the sequence of optimal weights. To this end we note that the variance of any linear estimator $\sum_{i=0}^{\infty}\,\u_i^T\,\X_i$, $\u_i\in\R^N$, $i=0,1,\ldots$, has the form
     \bel{warian}
     \var\,\sum_{i=0}^{\infty}\,\u_i^T\,\X_i=\sum_{i=0}^{\infty}\,\u_i^T\u_i+2\sum_{i=0}^{\infty}\,\sum_{k=1}^{N-1}\,\u_i^T{\bf C}^k\u_{i+k}.
     \ee
     We need to show that $\overline{\u}=(\u_i)_{i\ge 0}:=\overline{\w}$ with $\overline{\w}$ as defined in \eqref{overw} minimize this expression under the constraints \eqref{unb} and \eqref{pat}. Since the above variance as a function of $\overline{\u}$ is convex then the problem has the unique solution.
     Using the standard Lagrange method, that is differentiating the Lagrange function (with multipliers $(\lambda_{j,i})_{j\in H',\,i\ge 0}$)
     $$
     V(\overline{\u})=\sum_{i=0}^{\infty}\,\u_i^T\u_i+2\sum_{i=0}^{\infty}\,\sum_{k=1}^{N-1}\,\u_i^T{\bf C}^k\u_{i+k}
     -2\sum_{i=0}^{\infty}\,\sum_{j\in H'}\,\lambda_{j,i}\u_i^t\e_j,
     $$
     with respect to $(\u_i)_{i\ge 0}$ and comparing the derivatives to zero, equivalently, we need to show that there exist real numbers (Lagrange multipliers) $\lambda_{j,l}$, $j\in H'$, $l=0,1,\ldots$, such that
    \bel{Dw}
    \D\,\overline{\w}=\left[\Id+\sum_{k=1}^{N-1}\,\left({\bf C}^k\,\Le^k+({\bf C}^T)^k\,\Ri^k\right)\right]\,\overline{\w}=\overline{\underline{\Lambda}},
    \ee
    where $\overline{\w}$ is defined in \eqref{overw} and $\overline{\underline{\Lambda}}=(\underline{\Lambda}_0,\,\underline{\Lambda}_1,\,\ldots)$ with
    $$
    \underline{\Lambda}_l=\sum_{j\in H'}\,\lambda_{j,l}\,\e_j,\qquad l=0,1,\ldots
    $$

\item The constraints \eqref{unb} and \eqref{pat} are satisfied for $\overline{\w}$ as defined in \eqref{overw}.

\item The basic recurrence \eqref{rec} holds true with $\overline{\w}$ defined in \eqref{overw}, that is the sequence $\overline{\r}$ defined by
\bel{ery}
\overline{\r}:=\left(\Id-\sum_{m=1}^p\,a_m\,\Ri^m\right)\overline{\w}
\ee
has to satisfy
\bel{idp}
\Le^{p+1}\,\overline{\r}=\overline{\0}
\ee
and for any $i=0,1,\ldots,p$
\bel{idrl}
(\Id-\Ri\,\Le)\Le^i\,\overline{\r}=\sum_{m=1}^p\,\left[\left(v_i(d_m){\bf I}-v_{i-1}(d_m){\bf C}^T\right){\bf N}(d_m)\,\sum_{j\in H'}\,c_{j,m}\e_j\right]\,\overline{e},
\ee
where ${\bf N}(d)=\bD({\bf I}-d{\bf C})$.
\end{enumerate}

{\bf Ad. 1.} We will show that \eqref{Dw} holds with
\bel{lamb}
\lambda_{j,l}=\sum_{m=1}^p\,c_{j,m}\,d_m^l,\qquad j\in H',\;l=0,1,\ldots
\ee

By definition \eqref{overw} of $\overline{\w}$ we have
$$
\D\,\overline{\w}=\sum_{m=1}^p\,\sum_{j\in H'}\,c_{j,m}\,\overline{d}_m\,\overline{\e}_j=
\left(\sum_{j\in H'}\,\sum_{m=1}^p\,c_{j,m} d_m^l\,\e_j,\,l=0,1,\ldots\right).$$
Therefore, by definition of $\lambda_{j,l}$'s we obtain
$$\D\,\overline{\w}
=\left(\sum_{j\in H'}\,\lambda_{j,l}\,\e_j\right)
=\left(\underline{\Lambda}_0,\,\underline{\Lambda}_1,\,\ldots\right)=\overline{\underline{\Lambda}}.
$$

To see that $\lambda_{j,l}$ as defined through \eqref{lamb} are real numbers take first conjugates of both sides of ${\bf S}\,\c=\e$. Note that
$$
{\bf S}^*={\bf S}^*(d_1,\ldots,d_p)={\bf S}(d^*_1,\ldots,d^*_p).
$$
Since $d_1,\ldots,d_p$ are either real or come in conjugate pairs (see Rem. \ref{quadr}) the equation ${\bf S}^*\,\c^*=\e$ implies that for for any $j\in H'$ and any $m=1,\ldots,p$  either $\Im\,d_m=0$ and then $c_{j,m}$ is real or  $\Im\,d_m\ne 0$ and then there exists $n\ne m$ (with $d^*_n=d_m$) such that $c^*_{j,n}=c_{j,m}$. Therefore the quantities $c_{j,m}\,d_m^l$ in \eqref{lamb} are either real or come in conjugate pairs. Consequently, by \eqref{lamb} it follows that $\lambda_{j,l}$ is real.

{\bf Ad. 2.}
Note that applying \eqref{irl} and \eqref{AB}  to \eqref{overw} after an easy algebra we get
$$
\w_0=\sum_{m=1}^p\,\sum_{j\in H'}\,c_{j,m}\,{\bf N}(d_m)\,\e_j
$$
and
$$
\w_i=\sum_{m=1}^p\,\sum_{j\in H'}\,c_{j,m}d_m^{i-1}\,(d_m{\bf I}-{\bf C}^T)\,{\bf N}(d_m)\,\e_j,\qquad i=1,2,\ldots
$$

Let us rewrite the constraints \eqref{unb} and \eqref{pat} using the above formulas for $\w_0$ and $\w_i$, $i\ge 1$. The constraint \eqref{unb} for $i=0$ with $\w_0$ as defined above takes on the form
\bel{eq1}
\sum_{m=1}^p\,\sum_{j\in H'}\,c_{j,m}\,\1^T\,{\bf N}(d_m)\e_j=1
\ee
and for $i\ge 1$
\bel{eq2}
\sum_{m=1}^p\,\sum_{j\in H'}\,c_{j,m}\,d_m^{i-1}\1^T(d_m{\bf I}-{\bf C}^T)\,{\bf N}(d_m)\e_j=0.
\ee

The constraint \eqref{pat} for $i=0$, that is for $\w_0$, has the form
\bel{eq3}
\sum_{m=1}^p\,\sum_{j\in H'}\,c_{j,m}\,\e_k^T\,{\bf N}(d_m)\e_j=0,\qquad k\in H.
\ee
for $i>0$ it has the form
\bel{eq4}
\sum_{m=1}^p\,\sum_{j\in H'}\,c_{j,m}\,d_m^{i-1}\e_k^T(d_m{\bf I}-{\bf C}^T)\,{\bf N}(d_m)\e_j=0,\qquad k\in H.
\ee

Note that $N\times N$ matrix
$$
{\bf N}(d)=\tfrac{1}{1-\rho^2}\,\left[\begin{array}{ccccc}
                                      1 & -\rho d  & \ddots & 0 & 0 \\
                                      0 & 1 &  \ddots & \ddots & 0 \\
                                      \ddots & \ddots & \ddots & \ddots & \ddots \\
                                      0 & \ddots & \ddots & 1 & -\rho d \\
                                      0 & 0 & \ddots & 0 & 1-\rho^2
                                      \end{array}\right]
                                      $$
and $(d{\bf I}-{\bf C}^T)\,{\bf N}(d)=\tfrac{d}{1-\rho^2}{\bf H}_N(d)$ - see \eqref{Hm}. Thus, by elementary computations, we get
\bel{111}
\e_k^T{\bf N}(d)\e_j=\tfrac{1}{1-\rho^2}\,\left\{\begin{array}{ll}
(N-1)(1-d\rho)+1-\rho^2, & k=j=0, \\
1-d\rho, & k=0,\,j\in H\;\mbox{or}\;k\in H,\,j=0, \\
\begin{array}{l} 1, \\ -d\rho, \\ 0, \end{array} &
\left.\begin{array}{l} k=j, \\ k=j-1, \\  \mbox{otherwise}, \end{array}\right\} k,j\in H\end{array}\right.
\ee
and
\bel{222}
\e_k^T(d{\bf I}-{\bf C}^T){\bf N}(d)\e_j=\tfrac{1}{1-\rho^2}\,\left\{\begin{array}{ll}
(N-1)(1-d\rho)(d-\rho)+d(1-\rho^2), & k=j=0, \\
(1-d\rho)(d-\rho), & k=0,\,j\in H\;\mbox{or}\;k\in H,\,j=0, \\
\begin{array}{l} -\rho, \\ d(1+\rho^2), \\ -d^2\rho, \\ 0, \end{array} &
\left.\begin{array}{l} k=j+1, \\ k=j, \\ k=j-1, \\  \mbox{otherwise}, \end{array}\right\} k,j\in H.\end{array}\right.
\ee

Due to \eqref{111} and \eqref{222}, the constraints \eqref{eq1}, \eqref{eq2}, \eqref{eq3} and \eqref{eq4} can be rewritten in a matrix form as
\bel{linrow}
\left[\begin{array}{cccc}
\widetilde{\bf G}(d_1) & \widetilde{\bf G}(d_2) & \cdots & \widetilde{\bf G}(d_p) \\
\bar{\bf G}(d_1) & \bar{\bf G}(d_2) & \cdots & \bar{\bf G}(d_p) \\
d_1\bar{\bf G}(d_1) & d_2\bar{\bf G}(d_2) & \cdots & d_p\bar{\bf G}(d_p) \\
\vdots & \vdots & \ddots & \vdots \\
d_1^i\bar{\bf G}(d_1) & d_2^i\bar{\bf G}(d_2) & \cdots & d_p^i\bar{\bf G}(d_p) \\
\vdots & \vdots & \ddots & \vdots
\end{array}\right]\,\c=\overline{e},
\ee
where $\widetilde{\bf G}(d)$ is defined through \eqref{tildeG} and \eqref{tildeH},
$$
\bar{\bf G}(d)=\tfrac{d}{1-\rho^2}\,\left[\begin{array}{cc} {\bf H}_{11}(d) & {\bf H}_{12}(d) \\
{\bf H}_{21}(d) & {\bf H}_{22}(d)\end{array}\right]
$$
with
$$
{\bf H}_{11}(d)=(N-1)(1-\rho d)(1-\rho/d)+1-\rho^2,\qquad {\bf H}_{12}={\bf H}_{21}^T=(1-\rho d)(1-\rho/d)\,\1_h^T,
$$
$$
{\bf H}_{22}(d)=\mathrm{diag}({\bf H}_1(d),\ldots,{\bf H}_s(d)),
$$
and matrices ${\bf H}_i(d)$, $i=1,\ldots,s$, are defined in \eqref{Hm}.

The infinite matrix at the left hand side of \eqref{linrow} can be  written as
$$
\left[\begin{array}{ccccc}
{\bf I} & {\bf 0} & {\bf 0} & \cdots & {\bf 0} \\
{\bf 0} & {\bf I} & {\bf I} & \cdots & {\bf I} \\
{\bf 0} & d_1{\bf I} & d_2{\bf I} & \cdots & d_p{\bf I} \\
\vdots & \vdots & \vdots & \ddots & \vdots \\
{\bf 0} & d_1^i{\bf I} & d_2^i{\bf I} & \cdots & d_p^i{\bf I} \\
\vdots & \vdots & \vdots & \ddots & \vdots
\end{array}\right]\;\left[\begin{array}{cccc} \widetilde{\bf G}(d_1) & \widetilde{\bf G}(d_2) & \cdots & \widetilde{\bf G}(d_p) \\
                                               \bar{\bf G}(d_1) & {\bf 0} & \cdots & {\bf 0}  \\
                                               {\bf 0} & \bar{\bf G}(d_2) & \cdots & {\bf 0} \\
                                               \vdots & \vdots & \ddots & \vdots \\
                                               {\bf 0} & {\bf 0} & \cdots & \bar{\bf G}(d_p)
                           \end{array}\right],
$$
where ${\bf I}={\bf I}_{h+1}$ and ${\bf 0}={\bf 0}_{h+1}$ are, respectively,  $(h+1)\times(h+1)$ unit and zero  matrices.
Note that the first matrix in the product above is of full rank and can be written as
$$
\left[\begin{array}{ccccc}
1 & 0 & 0 & \cdots & 0 \\
0 & 1 & 1 & \cdots & 1 \\
0 & d_1 & d_2 & \cdots & d_p \\
\vdots & \vdots & \vdots & \ddots & \vdots \\
0 & d_1^i & d_2^i & \cdots & d_p^i \\
\vdots & \vdots & \vdots & \ddots & \vdots
\end{array}\right]\,\otimes\,{\bf I}_{h+1}.
$$ Therefore \eqref{linrow} is equivalent to
\bel{abeq}
\left[\begin{array}{cccc} \widetilde{\bf G}(d_1) & \widetilde{\bf G}(d_2) & \cdots  & \widetilde{\bf G}(d_p) \\
                                               \bar{\bf G}(d_1) & {\bf 0} & \cdots  & {\bf 0} \\
                                               {\bf 0} & \bar{\bf G}(d_2) & \cdots  & {\bf 0} \\
                                               \vdots & \vdots & \ddots & \vdots \\
                                               {\bf 0} & {\bf 0} & \cdots  & \bar{\bf G}(d_p)
                           \end{array}\right]\,\c=(1,\,0,\,\ldots,\,0)^T\in \R^{(p+1)(h+1)}.
\ee

Assume that we prove that $(h+1)\times (h+1)$ matrices $\bar{\bf G}(d_m)=0$, $m=1,\ldots,p$, are singular. Note that  $d\,[{\bf H}_{21}(d),\,{\bf H}_{22}(d)]={\bf G}(d)$ due to \eqref{bfG}. Therefore, the definition \eqref{S} of ${\bf S}$ implies that \eqref{abeq} is equivalent to ${\bf S}\,\c=(1,\,0,\,\ldots,\,0)\in\R^{ph+h+1}$. It is obtained from \eqref{abeq} by deleting all rows determined through first rows of matrices $\bar{\bf G}(d_m)$, $m=1,\ldots,p$. And the equation ${\bf S}\,\c=(1,0,\ldots,0)$ follows by ASSUMPTION II and the definition of $\c$.

Consequently, it suffices to show that $\det\,\bar{\bf G}(d_m)=0$, $m=1,\ldots,p$. That is, we need to check that
$$
0=\det\,\left[\begin{array}{cc} {\bf H}_{11}(d_m) & {\bf H}_{12}(d_m) \\
{\bf H}_{21}(d_m) & {\bf H}_{22}(d_m)\end{array}\right]
$$
for any $m=1,\ldots,p$.

Note that with $d=d_m$ the right hand side can be written as
$$
\det\,{\bf H}_{22}(d)\,\det\left[{\bf H}_{11}(d)-{\bf H}_{12}(d){\bf H}_{22}^{-1}(d){\bf H}_{21}(d)\right]
$$
and
\bel{h22}
\det\,{\bf H}_{22}(d)=\prod_{i=1}^s\,\det\,{\bf H}_{m_i}(d).
\ee
Since ${\bf H}_m(d)$ can be decomposed as
\bel{Hmdec}
{\bf H}_m(d)={\bf D}^{-1}_m{\bf R}_m{\bf D}_m,
\ee
where ${\bf D}_m=\mathrm{diag}(1,\,d,\,d^2,\,\ldots,\,d^{m-1})$ and ${\bf R}_m$ is defined in \eqref{Rm} we see that
$$\det\,{\bf H}_m(d)=1+\rho^2+\ldots+\rho^{2m}\ne 0.$$ Now, from \eqref{h22} it follows that $\det\,{\bf H}_{22}\ne 0$.

On the other hand
\bel{shur}
\det\left[{\bf H}_{11}(d)-{\bf H}_{12}(d)\,{\bf H}_{22}^{-1}(d)\,{\bf H}_{21}(d)\right]=(N-1)\alpha(\rho,\,d)+1-\rho^2
-\alpha^2(\rho,\,d)\,\sum_{j=1}^s\1^T\,{\bf H}_{m_j}^{-1}\,\1,
\ee
where $\alpha(\rho,\,d)=1+\rho^2-(d+d^{-1})\rho$.

The decomposition \eqref{Hmdec} of ${\bf H}_m$ gives
$$
\1^T\,{\bf H}_m^{-1}\,\1=\mathrm{tr}(\1^T\,{\bf D}_m^{-1}\,{\bf R}_m^{-1}\,{\bf D}_m\,\1)=\mathrm{tr}({\bf D}_m\1\1^T{\bf D}_m^{-1}{\bf R}_m^{-1})
$$
Moreover, since $\mathrm{tr}({\bf A})=\mathrm{tr}({\bf A}^T)$
$$
\1^T\,{\bf H}_m^{-1}\,\1=\mathrm{tr}(({\bf D}_m\1\1^T{\bf D}_m^{-1}{\bf R}_m^{-1})^T)=\mathrm{tr}({\bf R}_m^{-1}{\bf D}_m^{-1}\1\1^T{\bf D}_m)
=\mathrm{tr}({\bf D}_m^{-1}\1\1^T{\bf D}_m{\bf R}_m^{-1}).
$$
Combining the last two expressions for $\1^T\,{\bf H}_m^{-1}\,\1$ we get
$$
\1^T\,{\bf H}_m^{-1}\,\1=\mathrm{tr}\left(\tfrac{1}{2}({\bf D}_m\1\1^T{\bf D}_m^{-1}+{\bf D}_m^{-1}\1\1^T{\bf D}_m){\bf R}_m^{-1}\right).
$$

Note that
$$
({\bf D}_m\1\1^T{\bf D}_m^{-1}+{\bf D}_m^{-1}\1\1^T{\bf D}_m)_{ij}=d^{|i-j|}+d^{-|i-j|},
$$
and that
$$
\tfrac{1}{2}(d^k+d^{-k})=T_k(\tfrac{1}{2}(d+d^{-1})),\qquad k=0,1,\ldots,
$$
where $(T_k)$ is the $k$th Chebyshev polynomials of the first type.

Thus
$$
\1^T\,{\bf H}_m^{-1}\,\1=\mathrm{tr}\,{\bf T}_m(x)\,{\bf R}_m^{-1},
$$
where $x=x(d)=\tfrac{1}{2}(d+d^{-1})$ and the matrix ${\bf T}_m$ is defined in \eqref{Tm}. Plugging this expression to \eqref{shur} we find out that
$$
\det({\bf H}_{11}(d)-{\bf H}_{12}(d)\,{\bf H}_{22}^{-1}(d)\,{\bf H}_{21}(d))=Q_p(x(d)),
$$
where $Q_p$ is the polynomial defined in \eqref{Qp}. By ASSUMPTION I $Q_p(x(d_m))=0$, thus the above equality gives $\det\,\bar{\bf G}(d_m)=0$, $m=1,\ldots,p$. Finally, we conclude that the constraints \eqref{unb} and \eqref{pat} are satisfied and thus the proof of  point {\bf 2.} is completed.

{\bf Ad. 3.} First, we will show that for $\overline{\r}$ defined by \eqref{ery} the identity \eqref{idp} holds. To this end observe that by \eqref{zerid} for $i=p$, \eqref{invD} and \eqref{Dw}
$$
\Le^{p+1}\left(\Id-\sum_{m=1}^p\,a_m\Ri^m\right)\overline{\w}=\Le\left(\Le^p-\sum_{m=1}^p\,a_m\,\Le^{p-m}\right)\D^{-1}\overline{\underline{\Lambda}}
=\Le\D^{-1}\left(\Le^p-\sum_{m=1}^p\,a_m\,\Le^{p-m}\right)\overline{\underline{\Lambda}}.
$$
Note also that for any $j=1,\ldots,p$ by \eqref{secondid}
$$
\left(\Le^p-\sum_{m=1}^p\,a_m\Le^{p-m}\right)\overline{d}_j=v_p(d_j)\overline{d}_j.
$$
By the definition \eqref{ak} of $a_m$, $m=1,\ldots,p$ it follows that $v_p(d_j)=0$. Due to the definition of $\Lambda$ through \eqref{lamb} we conclude that $\Le^{p+1}\overline{\r}=\overline{\0}$.

In order to check \eqref{idrl} first we note that due to \eqref{invD} it follows from \eqref{Ly} and \eqref{rilj} that for $\overline{y}=(y^n)_{n\ge 0}$ and $\overline{\x}=(\x,\,\x,\,\ldots)$
$$
\D^{-1}\,\overline{y}\,\overline{\x}=(\Id-{\bf C}^T\Ri)\,{\bf N}(y)\,\overline{y}\,\overline{\x}.
$$
Therefore for any $i\ge 0$ any $d_j$ and $\e_{j_k}$ by \eqref{zerid}
$$
\Le^i\left(\Id-\sum_{m=1}^p\,a_m\Ri^m\right)\,\D^{-1}\,\overline{d}_j\,\overline{\e}_{j_k}
$$$$=\left(\Le^p-\sum_{m=1}^p\,a_m\,\Le^{p-m}\right)\,\Ri^{p-i}\,\overline{d}_j\,{\bf N}(d_j)\overline{\e}_{j_k}
-\left(\Le^p-\sum_{m=1}^p\,a_m\,\Le^{p-m}\right)\,\Ri^{p-(i-1)}\,\overline{d}_j\,{\bf C}^T\,{\bf N}(d_j)\overline{\e}_{j_k}.
$$

Finally, we use \eqref{firstid} with $\overline{y}=\overline{d}_j$, $\overline{\x}={\bf N}(d_j)\overline{\e}_{j_k}$ to the first part and with $\overline{y}=\overline{d}_j$, $\overline{\x}={\bf C}^T\,{\bf N}(d_j)\overline{\e}_{j_k}$ to the second part of the expression at the right hand side of the equation above arriving at
$$(\Id-\Ri\,\Le)\,\Le^i\left(\Id-\sum_{m=1}^p\,a_m\Ri^m\right)\,\D^{-1}\,\overline{d}_j\,\overline{\e}_{j_k}
=\left(v_i(d_j){\bf I}-v_{i-1}(d_j){\bf C}^T\right)\,{\bf N}(d_j)\,(\e_{j_k},\,\0,\,\0,\,\ldots).
$$
Thus \eqref{idrl} holds true.

Finally we will prove the formula \eqref{waria} for the variance of the BLUE $\hat{\mu}_t$. To this end we observe first that
$$
\cov(\hat{\mu}_t,\X_{t-i})=\w_i+\sum_{k=1}^{N-1}\,{\bf C}^k\w_{i+k}+\sum_{k=1}^{i\wedge (N-1)}({\bf C}^T)^k\w_{i-k}
$$
for any $i=0,1,\ldots$ On the other hand, due to \eqref{Dw}, we see that the right hand side of the above equality is equal to $\underline{\Lambda}_i$.  That is, for any $i=0,1,\ldots$
$$
\cov(\hat{\mu}_t,\X_{t-i})=\sum_{j\in H'}\,\lambda_{j,i}\,\e_j.
$$
Now, we write
$$
\var\,\hat{\mu}_t=\sum_{i=0}^{\infty}\w_i^T\cov(\hat{\mu}_t,\X_{t-i})=\sum_{i=0}^{\infty}\,\sum_{j\in H'}\,\lambda_{j,i}\w_i^T\e_j.
$$
Due to the constraints \eqref{unb} and \eqref{pat} it follows from the above formula that $\var\,\hat{\mu}_t=\lambda_{0,0}$. Thus, \eqref{waria} follows from \eqref{lamb}.
\end{proof}

\vspace{5mm}
\small

\end{document}